\theoremstyle{plain}
\newtheorem{teor}{Theorem}[section]
\newtheorem{lema}[teor]{Lemma}
\newtheorem{prop}[teor]{Proposition}
\theoremstyle{definition}
\newtheorem{defi}{Definition}[section]
\newtheorem{nota}[teor]{Remark}
\newtheoremstyle{teoremacita}
{3pt}
{3pt}
{\itshape}
{}
{\bfseries}
{}
{ }
{\thmname{#1}\thmnumber{ #2'}\thmnote{ #3}.}
\theoremstyle{teoremacita} \newtheorem*{teor*}{}
\newcommand{\be}{\begin{enumerate}}
\newcommand{\ee}{\end{enumerate}}
\newcommand{\bi}{\begin{itemize}}
\newcommand{\ei}{\end{itemize}}
\def\N{\mathbb N}
\def\R{\mathbb R}
\def\C{\mathbb C}
\def\a{\alpha}
\def\e{\varepsilon}
\def\d{\partial}
\begin{document}

\title{An extension of Borel-Laplace methods and monomial summability}

\author{Sergio A. Carrillo}
\address[Sergio A. Carrillo]{Escuela de Ciencias Exactas e Ingenier\'{i}a \\ Universidad Sergio Arboleda \\ Calle 74,  \# 14-14 \\Bogot\'{a} - Colombia}
\email{sergio.carrillo@correo.usa.edu.co, sergio\_carrillo30@hotmail.com}

\author{Jorge Mozo-Fern\'{a}ndez}
\address[Jorge Mozo Fern\'{a}ndez]{Dpto. \'{A}lgebra, An\'{a}lisis Matem\'{a}tico, Geometr\'{\i}a y Topolog\'{\i}a \\
Facultad de Ciencias, Universidad de Valladolid \\
Campus Miguel Delibes\\
Paseo de Bel\'{e}n, 7\\
47011 Valladolid - Spain}
\email{jmozo@maf.uva.es}

\thanks{First author was supported by an FPI grant (call 2012/2013) conceded by Universidad de Valladolid, Spain.\\ Both authors partially supported by the Ministerio de Econom\'{\i}a y Competitividad from Spain, under the Project ``\'{A}lgebra y Geometr\'{\i}a en Din\'{a}mica Real y Compleja III" (Ref.: MTM2013-46337-C2-1-P)}


\subjclass[2010]{Primary 34M30, Secondary 34E05}
\date{\today}

\begin{abstract}
In this paper we will show that monomial summability can be characterized using Borel-Laplace like integral transformations depending of a parameter $0<s<1$. We will apply this result to prove 1-summability in a monomial of formal solutions of a family of partial differential equations.
\end{abstract}

\maketitle

\section{Introduction}

Monomial summability was developed \cite{Monomial summ} in order to understand summability properties of formal solutions of a class of singularly perturbed differential systems (doubly singular differential systems). In contrast with the classical theory of summability, there was not available an approach to the concept using integral transformation, i.e. a Borel-Laplace method of summation. On the other hand, in \cite{BalserMozo} these  formal solutions were studied in the linear case using certain integral transformations, also introduced later in \cite{Balser3} for any number of variables, but the relation between the methods of summation was not clear. It turns out that using an adaptation of those operators the problem of understanding that relation can be solved. In fact, one of the main results in this work is the following one (appropriate definitions will be introduced later):

\begin{teor*}[Theorem \ref{Monomial summability t Borel Laplace}]Let $\hat{f}$ be a $1/k-$Gevrey series in the monomial $x_1^px_2^q$. Then it is equivalent:
\begin{enumerate}
\item $\hat{f}\in E\{x_1,x_2\}_{1/k,d}^{(p,q)}$, i.e. $\hat{f}$ is $k-$summable in $x^{p}_{1}x_{2}^{q}$ in direction $d$.

\item There is $0<s<1$ such that $\hat{f}$ is $k-(s,1-s)-$Borel summable in the monomial $x_1^px_2^q$ in direction $d$.

\item For all $0<s<1$, $\hat{f}$ is $k-(s,1-s)-$Borel summable in the monomial $x_1^px_2^q$ in direction $d$.
\end{enumerate}
In all cases the corresponding sums coincide.
\end{teor*}

This result shows in particular that the integral methods introduced in \cite{BalserMozo} agree with the notion of monomial summability. And, moreover, they can be used in order to treat more complicated systems, for instance, of partial differential equations, namely, families of PDEs as follows:

$$x_1^px_2^q\left(\frac{s}{p}x_1\frac{\d \textbf{y}}{\d x_1}+\frac{1-s}{q}x_2\frac{\d \textbf{y}}{\d x_2}\right)=C(x_1,x_2)\textbf{y}+\gamma(x_1,x_2).$$

These systems generalize the ones in \cite{Monomial summ} for $s=0$ and $s=1$, under the assumption that $C(0,0)$ is invertible.

The plan of the paper is as follows: in Section \ref{Sumabilidad}, the notions of asymptotic expansions and summability, both in the one variable, and in the monomial case, are reviewed. Monomial Borel and Laplace transforms are defined and investigated in Section \ref{BorelyLaplace}, in order to introduce the summation methods using these operator in Section  \ref{sumacion}. The main result of this work, Theorem \ref{Monomial summability t Borel Laplace}, is stated and proved here. Finally, Section \ref{ejemplos} provides examples of application to a family of partial differential equations. A possible further development, in order to define multisummability in the monomial case, is sketched in Section \ref{further}.

\subsection*{Acknowledgments} The second author wants to thank the Universidad Sergio Arboleda (Bogotá, Colombia), for the hospitality while preparing this paper. Both authors would like to thank Javier Rib\'{o}n, from Universidade Federal Fluminense de Niter\'{o}i (Brazil), for fruitful conversations.

\section{Summability} \label{Sumabilidad}

We recall briefly the basic notions of asymptotic expansions, Gevrey asymptotic expansions and summability in the case of one variable and their extensions to the monomial case in two variables, as introduced in the paper \cite{Monomial summ}.

Let $(E,\|\cdot\|)$ be a complex Banach space and $\hat{f}=\sum a_n x^n\in E[[x]]$. We will denote by $\mathcal{C}(U, E)$ (resp. $\mathcal{O}(U, E)$, $\mathcal{O}_b(U, E)$) the space of continuous $E-$valued maps (resp. holomorphic, holomorphic and bounded $E-$valued maps) defined on an open set $U\subset \C^l$. If $E=\C$ we will simply write $\mathcal{O}(U)$. We also denote by $\N$ the set of natural numbers including $0$, by $\N_{>0}=\N\setminus\{0\}$, by $D_r\subset\C$ the disk centered at the origin with radius $r$ and by $V=V(a,b,r)=\{x\in\C | 0<|x|<r, a<\text{arg}(x)<b\}$ an open sector in $\C$. When we want to emphasize the bisecting direction $d=(b+a)/2$ and the opening of the sector we will write $V=S(d,b-a,r)=S$. In the case $r=+\infty$ we will simply write $V(a,b)=S(d,b-a)$. Consider $f\in\mathcal{O}(V, E)$. The map $f$ is said to have $\hat{f}$ as \textit{asymptotic expansion }at the origin on $V$ (denoted by $f\sim \hat{f}$ on $V$) if for each of its proper subsectors $V'=V(a',b',r')$ ($a<a'<b'<b$, $0<r'<r$) and each $N\in\N$, there exists $C_N(V')>0$ such that \begin{equation}\label{def asym classic}
\left\|f(x)-\sum_{n=0}^{N-1} a_n x^n\right\|\leq C_N(V')|x|^N, \hspace{0.3cm} \text{ on } V'.
\end{equation}

If we can take $C_N(V')=C(V')A(V')^N N!^s$, $C(V'), A(V')$ independent of $N$, the asymptotic expansion is said to be of \textit{$s-$Gevrey type} (denoted by $f\sim_s \hat{f}$ on $V$). In this case $\hat{f}\in E[[x]]_s$, where $E[[x]]_s$ denotes the space of $s-$Gevrey series, i.e. there exist $C,A>0$ such that $\|a_n\|\leq CA^nn!^s$, for all $n\in\N$.

Asymptotic expansions are unique, and respect algebraic operations and differentiation. In the Gevrey case Watson's lemma states that if $f\sim_s 0$ on $V(a,b,r)$ and $b-a> s\pi$ then $f\equiv 0$. This is the key point to define $k-$summability, as is explained below.

\begin{defi}Let  $\hat{f}\in E[[x]]$ be a formal power series, let $k>0$ and let $d$ be a direction.
\begin{enumerate}
\item The formal series $\hat{f}$ is called \textit{$k-$summable on $V=V(a,b,r)$} if $b-a>\pi/k$ and there exists a map $f\in\mathcal{O}(V,E)$ such that $f\sim_{1/k} \hat{f}$ on $V$. If $d$ is the bisecting direction of $V$ then $\hat{f}$ is called \textit{$k-$summable in the direction $d$} and $f$ is called \textit{the $k-$sum} of $\hat{f}$ in direction $d$ (or on $V$).

\item The formal series $\hat{f}$ is called \textit{$k-$summable}, if it is $k-$summable in every direction with finitely many exceptions mod. $2\pi$ (the singular directions).
\end{enumerate}

The space of $k-$summable series in the direction $d$ will be denoted by $E\{x\}_{1/k,d}$ and the space of $k-$summable series will be denoted by $E\{x\}_{1/k}$. Both spaces inherit naturally a structure of algebra when $E$ is a Banach algebra.
\end{defi}

To calculate the $k-$sum of a $k-$summable series in a direction $d$ we have at our disposal the \textit{Borel-Laplace method} that uses the following integral transformations:

\begin{enumerate}
\item \textit{The $k-$Borel transform}, defined as $\mathcal{B}_kf(\xi)=\frac{k}{2\pi i}\int_{\gamma} f(x)e^{(\xi/x)^k} \frac{dx}{x^{k+1}},$ where $f\in\mathcal{O}_b(S,E)$, $S=S(d,\pi/k+2\epsilon, R_0)$, $0<2\epsilon<\pi/k$ and $\gamma$ is the boundary of a sector at the origin of opening greater than $\pi/k$ contained in $S$, oriented in the positive sense. It induces the \textit{formal $k-$Borel transform} $\widehat{\mathcal{B}}_k$, defined for elements of $x^k E[[x]]$ term by term using the formula $\mathcal{B}_k(x^\lambda)(\xi)=\frac{\xi^{\lambda-k}}{\Gamma(\lambda/k)}$ for $\lambda\in\C$.

\item \textit{The $k-$Laplace transform in direction $d$}, defined as $\mathcal{L}_{k,d}(g)(x)=\int_0^{e^{id}\infty} g(\xi)e^{-(\xi/x)^k}d\xi^k,$ where
$g$ is continuous on the half-line with vertex at $0$ and direction $d$ and having exponential growth of order at most $k$ on its domain. When $g$ is defined on a sector of opening less than $\pi/k$, moving $d$ leads to analytic continuation of $\mathcal{L}_{k,d}(g)$, and we will simply write $\mathcal{L}_{k}(g)$.
\end{enumerate}

In order to sum $\hat{f}\in E[[x]]_{1/k}$, consider the convergent power series $\widehat{\mathcal{B}}_k(x^k\hat{f})$ and attempt to make analytic continuation, say $\varphi$, to an infinite sector $W$ containing $d$. If this is possible and $\varphi$ has exponential growth of order at most $k$ on $W$ then $\hat{f}$ is said to be \textit{k-Borel summable in direction $d$} and its sum is defined by $f(x)=\frac{1}{x^k}\mathcal{L}_{k}(\varphi)(x)$. Since the previous integral transformations are inverses of each other and behave well under Gevrey asymptotic expansions (see e.g. \cite{Balser2}) it follows that:

\begin{teor}[Ramis]\label{Borel sum iff Ramis sumable} A power series $\hat{f}\in E[[x]]$ is $k-$Borel summable in a direction $d$ if and
only if it is $k-$summable in the direction $d$ and both sums coincide.
\end{teor}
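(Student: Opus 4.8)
The plan is to exploit the two facts emphasized just before the statement: that $\mathcal{B}_k$ and $\mathcal{L}_k$ are mutually inverse, and that each transformation converts Gevrey asymptotic data into analytic-continuation-plus-growth data and back. I would prove the two implications separately, and then observe that in both cases the sum is produced by the single formula $f=\frac{1}{x^k}\mathcal{L}_k(\varphi)$, so that coincidence of the sums is automatic once uniqueness of asymptotic expansions (Watson's lemma) is invoked.

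For the implication ``$k$-summable $\Rightarrow$ $k$-Borel summable'', I would start from a sum $f\in\mathcal{O}(V,E)$ with $f\sim_{1/k}\hat{f}$ on a sector $V=V(a,b,r)$ of opening $b-a>\pi/k$. Since $\hat{f}$ is $1/k$-Gevrey, the formal transform $\widehat{\mathcal{B}}_k(x^k\hat{f})$ already converges near the origin; the point is to identify this convergent germ with $\varphi:=\mathcal{B}_k(x^kf)$. Applying the integral $k$-Borel transform to $x^kf$ along a contour inside $V$ yields a function holomorphic near the origin whose Taylor coefficients are exactly those of $\widehat{\mathcal{B}}_k(x^k\hat{f})$, by term-by-term integration of the asymptotic expansion. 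Because the opening of $V$ exceeds $\pi/k$, one can rotate the contour and obtain analytic continuation of $\varphi$ to an infinite sector $W$ around $d$; the Gevrey estimates on the remainders $f-\sum_{n<N}a_nx^n$ then force $\varphi$ to have exponential growth of order at most $k$ on $W$. This is precisely $k$-Borel summability, and $\frac{1}{x^k}\mathcal{L}_k(\varphi)=f$ because $\mathcal{L}_k\circ\mathcal{B}_k$ is the identity.

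For the converse ``$k$-Borel summable $\Rightarrow$ $k$-summable'', I would start from the analytic continuation $\varphi$ of $\widehat{\mathcal{B}}_k(x^k\hat{f})$ to an infinite sector $W$ containing $d$ with exponential growth of order $\leq k$. The Laplace transform $\mathcal{L}_k(\varphi)$ is then holomorphic on a sector bisected by $d$ of opening $>\pi/k$, and one checks, again by splitting $\varphi$ into its partial sum plus remainder and estimating each Laplace integral, that $f:=\frac{1}{x^k}\mathcal{L}_k(\varphi)$ satisfies $f\sim_{1/k}\hat{f}$ there. Hence $\hat{f}$ is $k$-summable in direction $d$ with sum $f$.

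Finally, for coincidence: both directions produce the same object $\frac{1}{x^k}\mathcal{L}_k(\varphi)$, and since any two $k$-sums on sectors of opening $>\pi/k$ sharing the direction $d$ agree by Watson's lemma, the sums coincide. I expect the main obstacle to be the two growth/asymptotics estimates linking a wide sector of Gevrey asymptotics to exponential growth of the Borel transform and vice versa, i.e.\ the ``behave well under Gevrey asymptotic expansions'' clause, since these require the careful contour deformations and splitting-of-the-remainder arguments that make $\pi/k$ appear as the \emph{exact} opening threshold. As these are classical (see \cite{Balser2}), the argument is essentially a careful assembly of known estimates rather than a new construction.
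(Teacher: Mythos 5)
Your proposal is correct and follows exactly the route the paper itself indicates: the paper does not prove Theorem \ref{Borel sum iff Ramis sumable} but attributes it to the facts that $\mathcal{B}_k$ and $\mathcal{L}_k$ are mutually inverse and behave well under Gevrey asymptotic expansions (citing Balser), which is precisely the assembly of estimates you describe. Your sketch also mirrors, in one variable, the argument the authors later carry out in detail for the monomial analogue (Propositions \ref{exp growth of Borel k, s1 s2} and \ref{Laplace k s_1 s_2 properties} leading to Theorem \ref{Monomial summability t Borel Laplace}), so no further comparison is needed.
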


We will extend now the concept of asymptotic expansions and $k-$summability in order to introduce  monomial asymptotics and summability in two variables is also possible. Indeed let $p,q\in\N_{>0}$ be fixed and consider the monomial $x_1^px_2^q$. For formal power series we may write any $\hat{f}=\sum a_{n,m} x_1^nx_2^m\in E[[x_1,x_2]]$ uniquely as \begin{equation}\label{para definir Tpq}
\hat{f}=\sum_{n=0}^\infty f_n(x_1,x_2)(x_1^px_2^q)^n,\hspace{0.4cm} f_n=\sum_{m<p\text{ or }j<q}a_{np+m,nq+j}x_1^mx_2^j.
\end{equation}

To ensure that each $f_n$ gives rise to an holomorphic map defined in a common polydisc at the origin it is necessary and sufficient that $\hat{f}\in \mathcal{C}=\bigcup_{r>0} \mathcal{C}_r$, where $\mathcal{C}_r=\mathcal{O}_b(D_r,E)[[x_1]]\cap \mathcal{O}_b(D_r,E)[[x_2]]$. If this is the case then $f_n\in \mathcal{E}^{(p,q)}=\bigcup_{r>0}\mathcal{E}^{(p,q)}_r$, where $\mathcal{E}^{(p,q)}_r$ is the space of holomorphic maps $g\in\mathcal{O}_b(D_r^2,E)$ with $\frac{\d^{n+m}g}{\d x_1^n\d x_2^m}(0,0)=0$ for $n\geq p$ and $m\geq q$. Each $\mathcal{E}^{(p,q)}_r$ becomes a Banach space with the supremum norm $\|g\|_r=\sup_{|x_1|,|x_2|\leq r} \|g(x_1,x_2)\|$.

Define the map $\hat{T}_{p,q}:\mathcal{C}\rightarrow \mathcal{E}^{(p,q)}[[t]]$ by $\hat{T}_{p,q}(\hat{f})=\sum_{n=0}^\infty f_n t^n$, using the decomposition (\ref{para definir Tpq}). We recall that $\hat{f}$ is said to be \textit{$s-$Gevrey in the monomial $x_1^px_2^q$} if for some $r>0$, $\hat{T}_{p,q}(\hat{f})\in\mathcal{E}_r^{(p,q)}[[t]]$ and it is a $s-$Gevrey series in $t$. The set of $s-$Gevrey series in the monomial $x_1^px_2^q$ will be denoted by $E[[x_1,x_2]]_s^{(p,q)}$.

It follows from Cauchy's estimates that $\sum a_{n,m}x_1^nx_2^m\in E[[x_1,x_2]]_s^{(p,q)}$ if and only if there exist constants $C,A>0$ such that $\|a_{n,m}\|\leq CA^{n+m}\min\{n!^{s/p}, m!^{s/q}\}$, for all $n,m\in\N$. In particular $E[[x_1,x_2]]_{Ms}^{(Mp,Mq)}=E[[x_1,x_2]]_s^{(p,q)}$ for any $M\in\N_{>0}$ and $\hat{f}\in E\{x_1,x_2\}$ if and only if $\hat{T}_{p,q}(\hat{f})\in \mathcal{E}^{(p,q)}_r\{t\}$, for some $r>0$ by taking $s=0$.

Let $E[[x_1,x_2]]_{(s_1,s_2)}$ denote the algebra of $(s_1,s_2)-$Gevrey series, i.e. $\sum a_{n,m} x_1^nx_2^m$ is $(s_1,s_2)$-Gevrey if there exist constants $C,A>0$ such that $\|a_{n,m}\|\leq CA^{n+m}n!^{s_1}m!^{s_2}$, for all $n,m\in\N$. The previous inequalities show that \begin{equation}\label{segment of line}
E[[x_1,x_2]]_s^{(p,q)}=E[[x_1,x_2]]_{(s/p,0)}\cap E[[x_1,x_2]]_{(0,s/q)}\subseteq E[[x_1,x_2]]_{(\lambda s/p,(1-\lambda)s/q)},\hspace{0.2cm} 0\leq \lambda\leq 1.
\end{equation}

\noindent The inclusion follows from the first inequality \begin{equation}\label{prop the segment}\min\{a,b\}\leq a^\lambda b^{1-\lambda}\leq \max\{a,b\},\end{equation} valid for any $a,b>0$ and $0\leq \lambda\leq 1$.

In the analytic setting we use \textit{sectors in the monomial $x_1^px_2^q$}, i.e. sets of the form $$\Pi_{p,q}=\Pi_{p,q}(a,b,r)=\left\{(x_1,x_2)\in \C^2 \hspace{0.1cm}|\hspace{0.1cm} 0<|x_1|^p, |x_2|^q<r,\hspace{0.1cm} a<\text{arg}(x_1^px_2^q)<b\right\}.$$

\noindent Here any convenient branch of arg may be used. The number $r$ denotes the \textit{radius}, $b-a$ the \textit{opening} and $d=(b+a)/2$ the \textit{bisecting direction} of the sector. We will also use the notation $\Pi_{p,q}(a,b,r)=S_{p,q}(d,b-a,r)=S_{p,q}$. In the case $r=+\infty$ we will simply write $\Pi_{p,q}(a,b)=S_{p,q}(d,b-a)$ and we will refer to them as \textit{unbounded sectors}. The definition of subsector in a monomial is clear.

If $f\in\mathcal{O}_b(\Pi_{p,q}(a,b,r),E)$ then we can construct an operator $T_{p,q}(f)_\rho:V(a,b,\rho^2)\rightarrow \mathcal{E}^{(p,q)}_{\rho}$, $0<\rho<r$, related to $\hat{T}_{p,q}$, such that $T_{p,q}(f)_\rho(x_1^px_2^q)(x_1,x_2)=f(x_1,x_2)$. We refer the reader to the original paper \cite{Monomial summ}.

Let $f\in\mathcal{O}(\Pi_{p,q},E)$, $\Pi_{p,q}=\Pi_{p,q}(a,b,r)$ and $\hat{f}\in\mathcal{C}$. We will say that \textit{$f$ has $\hat{f}$ as asymptotic expansion at the origin in $x_1^px_2^q$} (denoted by $f\sim^{(p,q)} \hat{f}$ on $\Pi_{p,q}$) if there is $0<r'\leq r$ such that $\hat{T}_{p,q}\hat{f}=\sum f_nt^n\in \mathcal{E}^{(p,q)}_{r'}[[t]]$ and for every proper subsector  $\Pi_{p,q}'=\Pi_{p,q}(a',b',\rho)$ with $0<\rho<r'$ and $N\in\N$ there exists $C_N(\Pi_{p,q}')>0$ such that \begin{equation}\label{formula def asym x^pe^q}
\left\|f(x_1,x_2)-\sum_{n=0}^{N-1}f_n(x_1,x_2)(x_1^px_2^q)^n\right\|\leq C_N(\Pi_{p,q}')|x_1^px_2^q|^N,\hspace{0.3cm} \text{ on } \Pi_{p,q}'.
\end{equation}

The asymptotic expansion is said to be of \textit{$s-$Gevrey type} (denoted by $f\sim^{(p,q)}_s \hat{f}$ on $\Pi_{p,q}$) if it is possible to choose $C_N(\Pi_{p,q}')=C(\Pi_{p,q}')A(\Pi_{p,q}')^N N!^s$ for some constants $C(\Pi_{p,q}')$, $A(\Pi_{p,q}')$ independent of $N$.

There are different characterizations of asymptotic expansions in a monomial, useful in certain situations. For instance, $f\sim^{(p,q)} \hat{f}$ on $\Pi_{p,q}(a,b,r)$ if and only if $T_{p,q}(f)_\rho\sim \hat{T}_{p,q}(\hat{f})$ on $V(a,b,\rho^2)$, for every $0<\rho<r'$. Here $\hat{T}_{p,q}\hat{f}\in\mathcal{E}_{r'}^{(p,q)}[[t]]$ and $0<r'<r$. The result is also valid for asymptotic expansions of $s-$Gevrey type. We remark that if $f\sim^{(p,q)}_s \hat{f}$ then $\hat{f}\in E[[x_1,x_2]]^{(p,q)}_s$.

In this context the analogous result to Watson's lemma reads as follows: if  $f\sim_s^{(p,q)} 0$ on $\Pi_{p,q}(a,b,r)$ and $b-a>s\pi$ then $f\equiv 0$. Thus we can finally recall the definition of $k-$summability in a monomial.

\begin{defi}Let $\hat{f}\in \mathcal{C}$, let $k>0$ and let $d$ be a direction.
\begin{enumerate}
\item The formal series $\hat{f}$ is called \textit{$k-$summable in $x_1^px_2^q$ on $\Pi_{p,q}=\Pi_{p,q}(a,b,r)$} if $b-a>\pi/k$ and there exists a map $f\in\mathcal{O}(\Pi_{p,q},E)$ such that $f\sim_{1/k}^{(p,q)} \hat{f}$ on $\Pi_{p,q}$. If $d$ is the bisecting direction of $\Pi_{p,q}$ then $\hat{f}$ is called \textit{$k-$summable in $x_1^px_2^q$ in the direction $d$} and $f$ is called \textit{the $k-$sum in $x_1^px_2^q$} of $\hat{f}$ in direction $d$ (or on $\Pi_{p,q}$).

\item The formal series $\hat{f}$ is called \textit{$k-$summable in $x_1^px_2^q$}, if it is $k-$summable in $x_1^px_2^q$ in every direction with finitely many exceptions mod. $2\pi$ (the singular directions).
\end{enumerate}

The space of $k-$summable series in $x_1^px_2^q$ in the direction $d$ will be denoted by $E\{x_1,x_2\}^{(p,q)}_{1/k,d}$ and the space of $k-$summable series in $x_1^px_2^q$ will be denoted by $E\{x_1,x_2\}^{(p,q)}_{1/k}$. Both spaces inherit naturally a structure of algebra when $E$ is a Banach algebra.
\end{defi}

Since $\hat{f}$ is $k-$summable in $x_1^px_2^q$ (resp. $k-$summable in direction $d$) if and only if  $\hat{T}_{p,q}(\hat{f})_\rho$ is $k-$summable (resp. $k-$summable in direction $d$) for all $\rho$ small enough, it is possible to carry on known theorems of summability in one variable in this context.

\section{Monomial Borel and Laplace transforms} \label{BorelyLaplace}

The goal of this section is to introduce an adaptation from \cite{Balser3} of Borel and Laplace transformations in two variables for maps defined on monomial sectors and to develop their main properties. We will also introduce a convolution product naturally associated with these transformations. We start with the study of the Borel transformation.

\begin{defi}The \textit{$k-$Borel transform w.r.t. $x_1^px_2^q$ and weight $(s,1-s)$}, $0<s<1$, of a map $f$ is defined by the formula $$\mathcal{B}_{\boldsymbol{\a}}(f)(\xi_1,\xi_2)=\frac{(\xi_1^{pk}\xi_2^{qk})^{-1}}{2\pi i}\int_\gamma f\left(\xi_1 u^{-\frac{s}{pk}}, \xi_2 u^{-\frac{1-s}{qk}}\right) e^u du,$$

\noindent where $\boldsymbol{\a}=\left(\frac{s}{pk},\frac{1-s}{qk}\right)$ and $\gamma$ denotes a Hankel path as explained below.
\end{defi}

In order to make the formula meaningful, we restrict our attention to maps $f\in\mathcal{O}_b(S_{p,q},E)$ where $S_{p,q}=S_{p,q}(d, \pi/k+2\epsilon, R_0)$, $0<2\epsilon<\pi/k$. Then $\mathcal{B}_{\boldsymbol{\a}}(f)$ is defined and holomorphic on $S_{p,q}(d, 2\epsilon)$. Indeed, if $(\xi_1, \xi_2)\in S_{p,q}(d, 2\epsilon')$, $0<\epsilon'<\epsilon$, take $\gamma$ oriented positively and given by the arc of a circle centered at $0$ and a radius $$R>\max\{(|\xi_1|^p/R_0)^{\frac{k}{s}}, (|\xi_2|^q/R_0)^{\frac{k}{1-s}}\},$$ with endpoints on the directions $-\pi/2-k(\epsilon-\epsilon')$ and $\pi/2+k(\epsilon-\epsilon')$ and the half-lines with those directions from this arc to $\infty$. If $u$ goes along this path the integrand is evaluated on $S_{p,q}$ and the integral converges absolutely, since $f$ is bounded and the exponential term tends to $0$ on those directions. The result is independent of $\epsilon'$ and $R$ due to Cauchy's theorem.

Note that this Borel transform reduces to the usual $k-$Borel transform for maps depending only on the monomial. Using Hankel's formula for the Gamma function we can compute \begin{equation}\label{borel lambda mu}
\mathcal{B}_{\boldsymbol{\a}}(x_1^{\mu_1} x_2^{\mu_2})(\xi_1, \xi_2)=\frac{\xi_1^{\mu_1} \xi_2^{\mu_2}}{\Gamma\left(\frac{\mu_1 s}{pk}+\frac{\mu_2 (1-s) }{qk}\right)}(\xi_1^{pk}\xi_2^{qk})^{-1},\hspace{0.5cm} \mu_1, \mu_2\in\C,
\end{equation}

\noindent and thus introduce $\hat{\mathcal{B}}_{\boldsymbol{\a}}$, the \textit{formal $k-$Borel transform w.r.t. $x_1^px_2^q$ and weight $(s,1-s)$}, defined on $(x_1^px_2^q)^kE[[x_1,x_2]]$. We see from (\ref{segment of line}) that the image of $(x_1^px_2^q)^k E[[x_1,x_2]]^{(p,q)}_{1/k}$ under $\hat{\mathcal{B}}_{\boldsymbol{\a}}$ is included in $E\{\xi_1, \xi_2\}$.

From the integral defining $\mathcal{B}_{\boldsymbol{\a}}$ is natural to consider the vector field $X_{\boldsymbol{\a}}$ given by $$X_{\boldsymbol{\a}}=x_1^{pk}x_2^{qk}\left(\frac{s}{pk}x_1\frac{\d}{\d x_1}+\frac{1-s}{qk}x_2\frac{\d}{\d x_2}\right).$$


If $f\in\mathcal{O}_b(S_{p,q},E)$ as before and $\phi^{\boldsymbol{\a}}_z$ denotes the flow of $X_{\boldsymbol{\a}}$ at time $z$, then it is straightforward to check that
\begin{align}
\label{Borel s1 s2 k y derivadas}
\mathcal{B}_{\boldsymbol{\a}}(X_{\boldsymbol{\a}}(f))(\xi_1, \xi_2)&=(\xi_1^p\xi_2^q)^k\mathcal{B}_{\boldsymbol{\a}}(f)(\xi_1, \xi_2), \\
\label{Borel s1 s2 k y flujo}
\mathcal{B}_{\boldsymbol{\a}}(f\circ\phi_z^{\boldsymbol{\a}})(\xi_1,\xi_2)&=e^{z\xi_1^{pk}\xi_2^{qk}}\mathcal{B}_{\boldsymbol{\a}}(f)(\xi_1,\xi_2).
\end{align}

Both formulas are naturally related and we can interpret (\ref{Borel s1 s2 k y derivadas}) as the linearized of (\ref{Borel s1 s2 k y flujo}) at $z=0$. In the variable $t=x_1^px_2^q$ the vector field $X_{\boldsymbol{\a}}$ reduces to $t^{k+1}/k \d/\d t$, and formula (\ref{Borel s1 s2 k y derivadas}) is just $\mathcal{B}_k\left(t^{k+1}\frac{\d F}{\d t}\right)(\zeta)=k\zeta^k\mathcal{B}_k(F)(\zeta)$.

It is convenient to introduce the following terminology: we will say that $f$ is \textit{of exponential growth at most $(\a_1, \a_2)\in\R_{>0}^2$ on $\Pi_{p,q}(a,b)$} if for every unbounded subsector $\Pi_{p,q}'$ of $\Pi_{p,a}(a,b)$ there are positive constants $C,M$ such that \begin{equation}\label{exp growth k, s_1 s_2}
\|f(\xi_1,\xi_2)\|\leq Ce^{M\left(|\xi_1|^{\a_1}+|\xi_2|^{\a_2}\right)},\hspace{0.4cm} \text{on }\Pi_{p,q}'.
\end{equation}

The term $|\xi_1|^{\a_1}+|\xi_2|^{\a_2}$ can be interchanged by $\max\{|\xi_1|^{\a_1},|\xi_2|^{\a_2}\}$ and vice versa by changing $M$ above. We note that if $f$ is entire and $\sum a_{n,m}x_1^n x_2^m$ is its Taylor series at the origin then the above condition is equivalent to have bounds of the type $$\|a_{n,m}\|\leq \frac{C' A^{n+m}}{\Gamma\left(1+\frac{n}{\a_1}+\frac{m}{\a_2}\right)}, \text{ for all } n,m\in\N.$$ This can be deduced using Cauchy's integral formulas for the coefficients, Stirling's formula and the inequality \begin{equation}\label{inq Gamma}
\Gamma(1+a)\Gamma(1+b)\leq\Gamma(1+a+b),\hspace{0.3cm}a,b>0,
\end{equation} satisfied by the Gamma function.

\begin{nota}\label{Borel an T_pq}If $\hat{f}\in \mathcal{C}$, $\hat{\varphi}_{\boldsymbol{\a}}=\hat{\mathcal{B}}_{\boldsymbol{\a}}(x_1^{pk}x_2^{qk}\hat{f})$,  $\hat{T}_{p,q}(\hat{f})=\sum f_n t^n$ and $\hat{T}_{p,q}(\hat{\varphi}_{\boldsymbol{\a}})=\sum \varphi_n \tau^n$ then $f_n$ and $\varphi_n$ are related by $\xi_1^{pn}\xi_2^{qn}\varphi_n(\xi_1,\xi_2)=\mathcal{B}_{\boldsymbol{\a}}(x_1^{p(n+k)}x_2^{q(n+k)}f_n)$. Since the $f_n$ are holomorphic in a common polydisc $D_\rho^2$ we see that the $\varphi_n$ are all entire maps and there are positive constants $L, M$  independent of $n$ but depending on $\rho$ such that \begin{equation}\label{growth of phi_m}
\|\varphi_n(\xi_1,\xi_2)\|\leq \frac{L\|f_n\|_\rho}{\Gamma\left(1+\frac{n}{k}\right)}e^{M\left(|\xi_1|^{\frac{pk}{s}}+|\xi_2|^{\frac{qk}{1-s}}\right)}, \text{ on } \C^2.
\end{equation}
\end{nota}

For further references we prove in the following proposition the behavior of the Borel transform w.r.t. monomial asymptotic expansions when the corresponding monomials are the same: the statement resembles the corresponding one in \cite{Balser3} and the proof mimics the proof of the behavior of a Borel transform for several variables included in \cite{Sanz}.

\begin{prop}\label{exp growth of Borel k, s1 s2}Consider $f\in\mathcal{O}(S_{p,q}(d,\pi/k+2\epsilon, R_0),E)$, $0<2\epsilon<\pi/k$, $\hat{f}\in\mathcal{C}$ and $s_1\geq0$. Let $g=\mathcal{B}_{\boldsymbol{\a}}(x_1^{pk}x_2^{qk} f)$, $\hat{g}=\hat{\mathcal{B}}_{\boldsymbol{\a}}(x_1^{pk}x_2^{qk}\hat{f})$ and $\hat{T}_{p,q}(\hat{g})=\sum g_n t^n$, where $\boldsymbol{\a}=\left(\frac{s}{pk},\frac{1-s}{qk}\right)$ and $0<s<1$. If $f\sim_{s_1}^{(p,q)}\hat{f}$ on $S_{p,q}$ then $g\sim^{(p,q)}_{s_2} \hat{g}$ on $S_{p,q}(d,2\epsilon)$, where $s_2=\max\left\{s_1-\frac{1}{k},0\right\}$. Furthermore for every unbounded subsector $S_{p,q}''$ of $S_{p,q}(d,2\epsilon)$ there are positive constants $B,D,M$ such that $$\left\|g(\xi_1,\xi_2)-\sum_{n=0}^{N-1} g_n(\xi_1,\xi_2) (\xi_1^p\xi_2^q)^n\right\|\leq DB^N\Gamma(1+Ns_2)|\xi_1^p\xi_2^q|^Ne^{M\left(|\xi_1|^{\frac{pk}{s}}+|\xi_2|^{\frac{qk}{1-s}}\right)}\text{ on }S_{p,q}''.$$ For $N=0$ the inequality is interpreted as $g$ being of exponential growth at most  $\left(\frac{pk}{s},\frac{qk}{1-s}\right)$ on $S_{p,q}(d,2\epsilon)$.
\end{prop}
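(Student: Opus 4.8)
The plan is to reduce the whole estimate to a single Hankel-path integral of the asymptotic remainder of $f$, and then to balance the radius of that path against the order $N$. First I would set $R_N=f-\sum_{n=0}^{N-1}f_n(x_1^px_2^q)^n$, so that the hypothesis $f\sim^{(p,q)}_{s_1}\hat{f}$ gives, on each proper subsector, a bound $\|R_N\|\leq CA^NN!^{s_1}|x_1^px_2^q|^N$. Because $\mathcal{B}_{\boldsymbol{\a}}$ is linear and, by Remark \ref{Borel an T_pq}, $\mathcal{B}_{\boldsymbol{\a}}(x_1^{p(n+k)}x_2^{q(n+k)}f_n)=g_n(\xi_1^p\xi_2^q)^n$, multiplying $R_N$ by $x_1^{pk}x_2^{qk}$ and applying $\mathcal{B}_{\boldsymbol{\a}}$ yields the exact identity
$$g(\xi_1,\xi_2)-\sum_{n=0}^{N-1}g_n(\xi_1,\xi_2)(\xi_1^p\xi_2^q)^n=\mathcal{B}_{\boldsymbol{\a}}\big(x_1^{pk}x_2^{qk}R_N\big)(\xi_1,\xi_2).$$
A direct substitution in the defining integral, using that the prefactor $(\xi_1^{pk}\xi_2^{qk})^{-1}$ cancels against $x_1^{pk}x_2^{qk}$ evaluated on the path, rewrites the right-hand side as $\frac{1}{2\pi i}\int_\gamma R_N(\xi_1u^{-s/(pk)},\xi_2u^{-(1-s)/(qk)})e^u\,\frac{du}{u}$, where along the path $|x_1^px_2^q|=|\xi_1^p\xi_2^q|\,|u|^{-1/k}$. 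Everything thus reduces to estimating this integral.

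Next I would fix a proper subsector $S_{p,q}(d,2\delta)$ with $\delta<\epsilon$ and take the Hankel path with half-line directions $\pm(\pi/2+k\theta)$, $0<\theta<\epsilon-\delta$, and arc radius $R$. The opening bookkeeping ($\mathrm{arg}$ of the evaluation point ranges over an interval of length $2\delta+\pi/k+2\theta<\pi/k+2\epsilon$) guarantees that the points $(\xi_1u^{-s/(pk)},\xi_2u^{-(1-s)/(qk)})$ stay in a proper subsector of $S_{p,q}$, so the remainder bound applies as long as $R>\rho_0:=\max\{(|\xi_1|^p/R_0)^{k/s},(|\xi_2|^q/R_0)^{k/(1-s)}\}$. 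Inserting $\|R_N\|\leq CA^NN!^{s_1}|\xi_1^p\xi_2^q|^N|u|^{-N/k}$, splitting $\gamma$ into its arc (where $|u|=R$, $|e^u|\leq e^R$) and its two half-lines (where $\mathrm{Re}(u)=-|u|\sin(k\theta)$ forces exponential decay), and bounding each piece, I obtain
$$\Big\|g(\xi_1,\xi_2)-\sum_{n=0}^{N-1}g_n(\xi_1,\xi_2)(\xi_1^p\xi_2^q)^n\Big\|\leq C_1A^NN!^{s_1}|\xi_1^p\xi_2^q|^N\,R^{-N/k}e^R,$$
with $C_1$ independent of $N$ and $R$.

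It then remains to choose $R$ well and to do the Gamma bookkeeping. I would take $R=\max\{N/k,\rho_0\}$. Since $\rho_0\leq R_0^{-k/s}|\xi_1|^{pk/s}+R_0^{-k/(1-s)}|\xi_2|^{qk/(1-s)}$, this produces the factor $e^{\rho_0}\leq e^{M(|\xi_1|^{pk/s}+|\xi_2|^{qk/(1-s)})}$, i.e. exactly the claimed exponential growth. For the remaining factor one checks in both regimes that $R^{-N/k}e^R\leq(N/k)^{-N/k}e^{N/k}e^{\rho_0}$, and Stirling's formula gives $(N/k)^{-N/k}e^{N/k}\leq C_3\sqrt{N}/\Gamma(1+N/k)$. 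The last comparison is $N!^{s_1}/\Gamma(1+N/k)$ against $\Gamma(1+Ns_2)$: when $s_1\geq1/k$ I write $s_1=s_2+1/k$ and use Stirling (or the convexity inequality (\ref{inq Gamma})) to get both $N!^{1/k}/\Gamma(1+N/k)\leq\text{const}^N$ and $N!^{s_2}\leq\text{const}^N\Gamma(1+Ns_2)$; when $s_1<1/k$ the exponent $s_1-1/k$ is negative so the quotient is $\leq\text{const}^N=\text{const}^N\Gamma(1+Ns_2)$. Collecting constants produces the asserted inequality with suitable $B,D,M$. Finally, the case $N=0$ is precisely the exponential-growth statement (\ref{exp growth k, s_1 s_2}), and restricting to bounded subsectors of $S_{p,q}(d,2\epsilon)$—where $e^{M(\cdots)}$ is bounded and $\Gamma(1+Ns_2)$ agrees with $N!^{s_2}$ up to geometric factors—gives $g\sim^{(p,q)}_{s_2}\hat{g}$ via (\ref{formula def asym x^pe^q}).

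I expect the main obstacle to be precisely this final optimization: choosing a single radius $R$ that simultaneously serves the two competing regimes (whether $N/k$ or $\rho_0$ dominates) so as to extract the sharp exponential growth $e^{\rho_0}$ while converting $N!^{s_1}/\Gamma(1+N/k)$ into the $\Gamma(1+Ns_2)$ form, together with the care needed to keep the deformed path inside $S_{p,q}$, where alone the remainder estimate for $R_N$ is valid. The integral estimate and the reduction step are routine by comparison.
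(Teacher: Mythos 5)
Your argument is correct and follows the same skeleton as the paper's proof — reduce to a single Hankel integral of the remainder $R_N$ and tune the arc radius $R$ against $N$ — but it diverges at the decisive optimization step, in a way worth recording. The paper splits $S_{p,q}(d,2\epsilon')$ into a bounded piece $S_{p,q}(d,2\epsilon',r)$, where it takes $R=N/k$ for large $N$, and the unbounded complement, where it takes $R=\max\left\{(|\xi_1|^p/R_1)^{k/s},(|\xi_2|^q/R_2)^{k/(1-s)}\right\}$ and invokes the Sanz-type infimum inequality (\ref{h(N,t)}) separately in each variable; this yields the intermediate denominator $\Gamma\left(1+\frac{sN}{k}\right)\Gamma\left(1+\frac{(1-s)N}{k}\right)$ before Stirling. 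You instead make the single global choice $R=\max\{N/k,\rho_0\}$ and exploit the elementary monotonicity of $R\mapsto R^{-N/k}e^{R}$ for $R\geq N/k$ to get $R^{-N/k}e^{R}\leq (N/k)^{-N/k}e^{N/k}e^{\rho_0}$ in both regimes at once, which is simpler and avoids the auxiliary lemma entirely; the subsequent Gamma bookkeeping ($N!^{s_1}/\Gamma(1+N/k)$ versus $\Gamma(1+Ns_2)$, split according to $s_1\geq 1/k$ or not) is sound. Two small points you should make explicit: the remainder bound for $R_N$ is only valid on \emph{proper} subsectors, so the constraint should read $R\geq\max\left\{(|\xi_1|^p/R_0')^{k/s},(|\xi_2|^q/R_0')^{k/(1-s)}\right\}$ for some $R_0'<R_0$ (the paper uses $R_0/2$), which only changes $M$; and your constant $C_1$ is not uniform as $R\to 0$, since the half-line contribution involves $\int_R^\infty e^{-t\sin(k\theta)}\,dt/t$, which grows like $\log(1/R)$ for small $R$ — this is harmless because the admissible $R$ are only bounded below, so you may always clamp $R\geq\max\{1,N/k,\rho_0\}$ at the cost of a factor $e$. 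With those adjustments your route delivers exactly the stated inequality.
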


\begin{proof}We have to establish the bounds for sectors of the form $S_{p,q}'=S_{p,q}(d,2\epsilon')$ with $0<\epsilon'<\epsilon$. The proof relies on choosing adequately the radius of the arc of the path $\gamma$ in the definition. Write $\gamma=\gamma_1+\gamma_2-\gamma_3$ where $\gamma_1, \gamma_3$ denote the half-lines and $\gamma_2$ denotes the circular part, parameterized by $\gamma_2(\theta)=Re^{i\theta}$, $|\theta|\leq\pi/2+k(\epsilon''-\epsilon')/2$, where $0<\epsilon'<\epsilon''<\epsilon$ and $R$ will be chosen so that $(\xi_1 u^{-\frac{s}{pk}}, \xi_2 u^{-\frac{1-s}{qk}})\in S_{p,q}=S_{p,q}(d,\pi/k+2\epsilon'', R_0/2)$ for all $u$ on $\gamma$ and $(\xi_1,\xi_2)\in S_{p,q}'$.

Without loss of generality we may assume that the $f_n$ are holomorphic on $D_{R_0}^2$. By hypothesis inequality (\ref{formula def asym x^pe^q}) holds with $C_N=CA^N\Gamma(1+Ns_1)$ on $S_{p,q}$. Let us set $a=\sin\left(\frac{k}{2}(\epsilon''-\epsilon')\right)$ and with $R$ to be chosen, a straightforward estimate using Remark \ref{Borel an T_pq} shows that for all $N\in\N$ we have \begin{equation}\label{proof Bks_1s_2 2}
\left\|g(\xi_1,\xi_2)-\sum_{n=0}^{N-1}g_n(\xi_1,\xi_2)(\xi_1^p\xi_2^q)^n\right\|\leq
\frac{2CA^N}{a}\Gamma(1+Ns_1)\frac{|\xi_1^p\xi_2^q|^N}{R^{N/k}}\left(\frac{e^{-aR}}{R}+e^R\right)\text{ on }S_{p,q}'.
\end{equation}

\noindent For $N=0$ we are denoting $C=\sup_{(x_1,x_2)\in S_{p,q}}\|f(x_1,x_2)\|$ (note that $f$ is bounded here, as we have reduced radius and opening of the sector).

To prove the statement of the Proposition, let us divide our sector in two parts. First of all,  consider $(\xi_1,\xi_2)\in S_{p,q}(d,2\epsilon',r)$, with $r>0$ fixed. In this case choose $R\geq \max\{(2r/R_0)^{\frac{k}{s}}, (2r/R_0)^{\frac{k}{1-s}}\}$. Since it is enough to establish the bounds for large $N$ we can suppose $N$ is large enough and take $R=N/k$. Then the bound follows using Stirling's formula twice, as we have, asymptotically, that
$$
\frac{e^{R}}{R^{N/k}} = \frac{e^{N/k}}{\left( N/k \right)^{N/k}} \sim \frac{\sqrt{2\pi N/k}}{\Gamma (1+N/k)}.
$$
and that $\dfrac{\Gamma (1+Ns_1)}{\Gamma (1+N/k)}$ can be bounded by $C' (A')^{N} \Gamma ( 1+N (s_1-\frac{1}{k}))$, if $s_1\geq 1/k$, for some $C'$, $A'>0$.

To finish we establish the bound in the complementary, i.e., for $(\xi_1, \xi_2)\in S_{p,q}'\setminus S_{p,q}(d,2\epsilon',r)$. If $R_1, R_2<R_0/2$ we can take $R=R(\xi_1,\xi_2)=\max\{(|\xi_1|^p/R_1)^{\frac{k}{s}}, (|\xi_2|^q/R_2)^{\frac{k}{1-s}}\}$. Using the second inequality of (\ref{prop the segment}) we see that
$\frac{|\xi_1^p\xi_2^q|^k}{R_1^k R_2^k}\leq R(\xi_1,\xi_2)$ and thus (\ref{proof Bks_1s_2 2}) is bounded by \begin{equation}\label{last inequatily proof Borel}
\frac{CA^N}{a}\Gamma(1+Ns_1)R_1^N R_2^N\left(\frac{e^{-a\left(\frac{|\xi_1|^p}{R_1}\right)^{\frac{k}{s}}}}{\left(\frac{|\xi_1|^p}{R_1}\right)^k}+e^{\left(\frac{|\xi_1|^p}{R_1}\right)^{\frac{k}{s}}}\right)\left(\frac{e^{-a\left(\frac{|\xi_2|^q}{R_2}\right)^{\frac{k}{1-s}}}}{\left(\frac{|\xi_2|^q}{R_2}\right)^k}+e^{\left(\frac{|\xi_2|^q}{R_2}\right)^{\frac{k}{1-s}}}\right).
\end{equation}

Arguing as in \cite[Thm. 4.8]{Sanz} we see that for $k, a, S>0$ and $\rho>r>0$ there are positive constants $L, K, M'$ such that \begin{equation}\label{h(N,t)}
\inf_{0<t<\rho} t^N\left(\frac{e^{-a(\tau/t)^{\frac{k}{S}}}}{(\tau/t)^k}+e^{(\tau/t)^{\frac{k}{S}}}\right)\leq \frac{LK^N}{\Gamma\left(1+\frac{NS}{k}\right)}\tau^N e^{M'\tau^{\frac{k}{S}}}, \hspace{0.2cm} \text{ for all }\tau>r, N\in \N.
\end{equation}

Since $R_1$ and $R_2$ can be arbitrarily small, inequality (\ref{h(N,t)}) can be applied to $\tau=|\xi_1|^p$, $S=s$ and to $\tau=|\xi_2|^q$, $S=1-s$ to conclude that there are large enough constants $D',B',M$ such that $$\left\|g(\xi_1,\xi_2)-\sum_{n=0}^{N-1}g_n(\xi_1,\xi_2)(\xi_1^p\xi_2^q)^n\right\|\leq D'B'^N \frac{\Gamma(1+Ns_1)}{\Gamma\left(1+\frac{sN}{k}\right)\Gamma\left(1+\frac{(1-s)N}{k}\right)} |\xi_1^p\xi_2^q|^Ne^{M\left(|\xi_1|^{\frac{pk}{s}}+|\xi_2|^{\frac{qk}{1-s}}\right)},$$

\noindent on $S_{p,q}'\setminus S_{p,q}(d,2\epsilon',r)$. We can use Stirling's formula to finally conclude the proof.
\end{proof}

We now introduce the corresponding Laplace transformation that will turn out to be the inverse of the Borel transformation.

\begin{defi}The \textit{$k-$Laplace transform w.r.t. $x_1^px_2^q$ with weight $(s,1-s)$ in direction $\theta$}, $0<s<1$, $|\theta|<\pi/2$, of a map $f$ is defined by the formula $$\mathcal{L}_{\boldsymbol{\a},\theta}(f)(x_1, x_2)=x_1^{pk}x_2^{qk}\int_{0}^{e^{i\theta}\infty} f\left(x_1 u^{\frac{s}{pk}}, x_2 u^{\frac{1-s}{qk}}\right) e^{-u} du.$$ Here as before $\boldsymbol{\a}=\left(\frac{s}{pk},\frac{1-s}{qk}\right)$.
\end{defi}

We restrict our attention to maps $f\in\mathcal{O}(\Pi_{p,q},E)$,  $\Pi_{p,q}=\Pi_{p,q}(a,b)$. One may be tempted to impose an exponential growth on $f$ of order $k$ in the monomial $\xi_1^p\xi_2^q$, i.e., to suppose that $\|f(\xi_1,\xi_2)\|\leq Ce^{M|\xi_1^p\xi_2^q|^k}$ on $\Pi_{p,q}$ but this would imply that $f$ is a map depending only on $\xi_1^p\xi_2^q$. Indeed, if there exist a positive real function $K$ such that $\| f(\xi_1,\xi_2)\|\leq K(|\xi_1^p\xi_2^q |)$ on $\Pi_{p,q}$, $f$ can be decomposed as
 $f(\xi_1,\xi_2)=\sum_{{0\leq i<p}\atop {0\leq j <q}} \xi_1^i\xi_2^{j} f_{ij} (\xi_1^p,\xi_2^q)$. Consider now
 $$
 T_{1,1}(f_{ij}(t))(\zeta_1,\zeta_2)= \sum_{m=0}^{\infty} f_{ij,m} (t)\zeta_2^m +\sum_{m=1}^{\infty} \frac{f_{ij,-m}(t)}{t^m} \zeta_1^m,
 $$
 as in \cite{CM}. Using Cauchy formulas, if $|t|<\rho^2$ we have that
 \begin{align}
 \|f_{ij,m}(t)\| & \leq \frac{K(|t|)}{|t|^{i/p} \rho^{m+j/q-i/p}},\quad \text{if } m\geq 0,\\
 \|f_{ij,-m}(t)\| & \leq \frac{|t|^m K(|t|)}{|t|^{j/q} \rho^{m+i/p-j/q}},\quad \text{if } m>0.
 \end{align}
 If $m\geq 1$, $m+j/q-i/p>0$, $m+i/p-j/q>0$, and so, letting $\rho \rightarrow +\infty$, we obtain that $f_{ij,m}(t)\equiv 0$. If $m>0$ and $(i,j)\neq (0,0)$, consider one of the preceding inequalities, according to $j/q>i/p$ or $j/q<i/p$, and the same conclusion follows. Finally, we obtain that $f(\xi_1,\xi_2)=f_{00}(\xi_1^p\xi_2^q)$.

So, instead of imposing such an exponential growth to $f$, and as it is suggested by Proposition \ref{exp growth of Borel k, s1 s2}, it is natural to assume that $f$ is of exponential growth of order at most $\left(\frac{pk}{s}, \frac{qk}{1-s}\right)$ on $\Pi_{p,q}$. In such case, $\mathcal{L}_{\boldsymbol{\a},\theta}(f)$ is defined and holomorphic on the domain $D_{k,\theta}(a,b,M)$ of $\C^2$ defined by the conditions $$a-\theta/k<\text{arg}(x_1^px_2^q)<b-\theta/k,\hspace{0.2cm}M\left(|x_1|^{\frac{pk}{s}}+|x_2|^{\frac{qk}{1-s}}\right)<\cos\theta.$$

We note that by changing the direction $\theta$ by $\theta'$ we obtain an analytic continuation of $\mathcal{L}_{\boldsymbol{\a},\theta}(f)$ when $|\theta'-\theta|<k(b-a)$, a fact that follows directly from Cauchy's theorem. This process leads to an holomorphic map $\mathcal{L}_{\boldsymbol{\a}}(f)$ defined in the region $D_k(a,b,M)=\bigcup_{|\theta|<\pi/2} D_{k,\theta}(a,b,M)$. Note that given $a-\pi/2k<a'<b'<b+\pi/2k$ there is $r>0$ such that $\Pi_{p,q}(a',b',r)\subset D_k(a,b,M)$. As before, this Laplace transform reduces to the usual $k-$Laplace transform for maps depending only on the monomial.

The usual method of proof, i.e. an adequate choice of the path $\gamma$ in the definition of $\mathcal{B}_{\boldsymbol{\a}}$ and the Residue theorem, let us show that if $f\in\mathcal{O}_b(S_{p,q}(d,\pi/k+2\epsilon,R_0),E)$, $0<2\epsilon<\pi/k$ then $$\mathcal{L}_{\boldsymbol{\a}}\mathcal{B}_{\boldsymbol{\a}}(x_1^{pk}x_2^{qk}f)=x_1^{pk}x_2^{qk}f, \text{ on the intersection of their domains.}$$

An straightforward adaptation of Lerch's theorem shows that $\mathcal{L}_{\boldsymbol{\a}}$ is in fact injective. It follows that if $g$ is of exponential growth of order at most $\left(\frac{pk}{s}, \frac{qk}{1-s}\right)$ then $$\mathcal{B}_{\boldsymbol{\a}}\mathcal{L}_{\boldsymbol{\a}}(g)=g, \text{ on the intersection of their domains.}$$

We may define $\hat{\mathcal{L}}_{\boldsymbol{\a}}$, \textit{the formal $k-$Laplace transform w.r.t. $x_1^px_2^q$ and weight $(s,1-s)$}, as the inverse of $\hat{\mathcal{B}}_{\boldsymbol{\a}}$. Before we state the behavior of $\mathcal{L}_{\boldsymbol{\a}}$ w.r.t. monomial asymptotic expansion in the same monomial, we describe in the following remark the information we need about series such that their formal and analytic Laplace transforms coincide.

\begin{nota}\label{nota sobre Laplace formal y analitico}Let $\hat{f}=\sum a_{n,m}\xi_1^n\xi_2^m$ be a formal power series and $\hat{T}_{p,q}(\hat{f})=\sum f_n\tau^n$. A necessary and sufficient condition on $\hat{f}$  so that $\hat{\mathcal{L}}_{\boldsymbol{\a}}(\hat{f})$ is convergent is that $\hat{f}$ defines an entire map $f$ of exponential growth of order at most $\left(\frac{pk}{s}, \frac{qk}{1-s}\right)$ on $\C^2$. Then $\frac{1}{x_1^{pk}x_2^{qk}}\mathcal{L}_{\boldsymbol{\a}}(f)$ is holomorphic in a polydisc at the origin and $\frac{1}{x_1^{pk}x_2^{qk}}\hat{\mathcal{L}}_{\boldsymbol{\a}}(\hat{f})$ is its Taylor's series.

Now assume that there are positive constants $s_1,B,D,M$ such that the family of maps $f_n$ are entire and satisfy the bounds $$\|f_n(\xi_1,\xi_2)\|\leq DB^n\Gamma\left(1+ns_1\right)e^{M\left(|\xi_1|^{\frac{pk}{s}}+|\xi_2|^{\frac{qk}{1-s}}\right)},\hspace{0.3cm}\text{ on }\C^2.$$

Then we can conclude that $\hat{f}\in E[[\xi_1,\xi_2]]_{s_1}^{(p,q)}$, $\hat{\mathcal{L}}_{\boldsymbol{\a}}(\hat{f})\in x_1^{pk}x_2^{qk}E[[x_1,x_2]]_{s_1+1/k}^{(p,q)}$, all the maps $\mathcal{L}_{\boldsymbol{\a}}(f_n)$ are holomorphic in a common polydisc centered at the origin and $$\hat{\mathcal{L}}_{\boldsymbol{\a}}(\hat{f})=\sum_{n\geq 0} \mathcal{L}_{\boldsymbol{\a}}((\xi_1^p\xi_2^q)^nf_n ).$$

\end{nota}

We focus now in the behavior of the Laplace transform w.r.t. monomial asymptotic expansions. The reader may note that the hypotheses required may seem restrictive, but in fact those appear naturally when we compare with Proposition \ref{exp growth of Borel k, s1 s2}.

\begin{prop}\label{Laplace k s_1 s_2 properties} Consider $f\in \mathcal{O}(\Pi_{p,q}(a,b),E)$, $\Pi_{p,q}=\Pi_{p,q}(a,b)$, $\hat{f}\in \mathcal{C}$ and $s_1\geq0$. Assume that $f\sim^{(p,q)}_{s_1} \hat{f}$ on $\Pi_{p,q}$ and additionally:\begin{enumerate}
\item If $\hat{T}_{p,q}(\hat{f})=\sum f_n t^n$ then every $f_n$ is an entire map and there are positive constants $B,D,K$ such that $$\|f_n(\xi_1,\xi_2)\|\leq DB^n\Gamma\left(1+ns_1\right)e^{K\left(|\xi_1|^{\frac{pk}{s}}+|\xi_2|^{\frac{qk}{1-s}}\right)},\text{ on }\C^2.$$
\item For every unbounded subsector $\Pi_{p,q}'$ of $\Pi_{p,q}$ there are positive constants $C,A,M$ such that for all $N\in\N$ \begin{equation*}
\left\|f(\xi_1,\xi_2)-\sum_{n=0}^{N-1} f_n(\xi_1,\xi_2) (\xi_1^p\xi_2^q)^n\right\|\leq CA^N\Gamma(1+Ns_1)|\xi_1^p\xi_2^q|^Ne^{M\left(|\xi_1|^{\frac{pk}{s}}+|\xi_2|^{\frac{qk}{1-s}}\right)}\text{ on }\Pi_{p,q}'.\end{equation*}
\end{enumerate}
\noindent Then $\frac{1}{x_1^{pk}x_2^{qk}}\mathcal{L}_{\boldsymbol{\a}}(f)\sim_{s_2}^{(p,q)} \frac{1}{x_1^{pk}x_2^{qk}}\hat{\mathcal{L}}_{\boldsymbol{\a}}(\hat{f})$ on $D_{k}(a,b,M)$, where $s_2=\frac{1}{k}+s_1$.
\end{prop}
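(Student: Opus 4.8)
The plan is to run the proof of Proposition \ref{exp growth of Borel k, s1 s2} backwards: I would insert the asymptotic expansion of $f$ directly into the defining integral of $\mathcal{L}_{\boldsymbol{\a}}$ and read off a clean remainder formula. Fixing a direction $\theta$, $|\theta|<\pi/2$, and using $\mathcal{L}_{\boldsymbol{\a},\theta}$, the change of variable $\xi_1=x_1u^{s/pk}$, $\xi_2=x_2u^{(1-s)/qk}$ gives $\xi_1^p\xi_2^q=x_1^px_2^q\,u^{1/k}$, so that
$$\frac{1}{x_1^{pk}x_2^{qk}}\mathcal{L}_{\boldsymbol{\a}}((\xi_1^p\xi_2^q)^nf_n)(x_1,x_2)=(x_1^px_2^q)^n\int_0^{e^{i\theta}\infty}u^{n/k}f_n(x_1u^{s/pk},x_2u^{(1-s)/qk})e^{-u}\,du.$$
Since each $f_n$ is a convergent combination of monomials $\xi_1^m\xi_2^j$ with $m<p$ or $j<q$, Euler's integral for the Gamma function (along the ray of direction $\theta$) shows the integral above is a series in $x_1,x_2$ lying in the remainder class $\mathcal{E}^{(p,q)}$; invoking Remark \ref{nota sobre Laplace formal y analitico} (valid under hypothesis (1)), this function is exactly the coefficient $g_n$ in $\hat{T}_{p,q}(\frac{1}{x_1^{pk}x_2^{qk}}\hat{\mathcal{L}}_{\boldsymbol{\a}}(\hat{f}))=\sum g_n t^n$, and that same remark yields $\frac{1}{x_1^{pk}x_2^{qk}}\hat{\mathcal{L}}_{\boldsymbol{\a}}(\hat{f})\in E[[x_1,x_2]]_{s_2}^{(p,q)}$ with $s_2=s_1+1/k$.

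Next I would write $f=\sum_{n=0}^{N-1}f_n(\xi_1^p\xi_2^q)^n+R_N$ and apply $\frac{1}{x_1^{pk}x_2^{qk}}\mathcal{L}_{\boldsymbol{\a}}$ termwise; by the previous identity the partial sums collapse and leave
$$\frac{1}{x_1^{pk}x_2^{qk}}\mathcal{L}_{\boldsymbol{\a}}(f)-\sum_{n=0}^{N-1}g_n(x_1,x_2)(x_1^px_2^q)^n=\int_0^{e^{i\theta}\infty}R_N(x_1u^{s/pk},x_2u^{(1-s)/qk})e^{-u}\,du.$$
Here I use hypothesis (2): under the substitution $|\xi_1^p\xi_2^q|^N=|x_1^px_2^q|^N|u|^{N/k}$ and $|\xi_1|^{pk/s}+|\xi_2|^{qk/(1-s)}=(|x_1|^{pk/s}+|x_2|^{qk/(1-s)})|u|$, and parameterizing by $u=te^{i\theta}$ turns the bound into $CA^N\Gamma(1+Ns_1)|x_1^px_2^q|^N\int_0^\infty t^{N/k}e^{-\delta t}\,dt$, with $\delta=\cos\theta-M(|x_1|^{pk/s}+|x_2|^{qk/(1-s)})$, the inner integral being $\Gamma(1+N/k)\,\delta^{-1-N/k}$. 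The argument condition $a-\theta/k<\arg(x_1^px_2^q)<b-\theta/k$ defining $D_{k,\theta}(a,b,M)$ is precisely what keeps $(\xi_1,\xi_2)$ inside an unbounded subsector of $\Pi_{p,q}$ along the whole ray, so hypothesis (2) does apply.

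The main obstacle is to secure a lower bound $\delta\ge\delta_0>0$ uniform on each proper subsector of $D_k(a,b,M)$. I would exploit that such a subsector is a $\Pi_{p,q}(a',b',r)$ with $a-\pi/2k<a'<b'<b+\pi/2k$: for every admissible $\psi=\arg(x_1^px_2^q)\in(a',b')$ the set of $\theta$ with $(x_1,x_2)\in D_{k,\theta}(a,b,M)$ is, by these strict inequalities, bounded away from $\pm\pi/2$, so $\theta$ may be chosen (depending on the point) with $\cos\theta\ge c_0>0$ uniformly; shrinking $r$ so that $M(|x_1|^{pk/s}+|x_2|^{qk/(1-s)})\le c_0/2$ on the subsector then gives $\delta\ge c_0/2=:\delta_0$.

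Finally, with $\delta$ bounded below the remainder is majorized by $\tilde C\tilde A^N\Gamma(1+Ns_1)\Gamma(1+N/k)|x_1^px_2^q|^N$, and inequality (\ref{inq Gamma}) yields $\Gamma(1+Ns_1)\Gamma(1+N/k)\le\Gamma(1+Ns_2)$; Stirling's formula converts this into $C(\Pi')A(\Pi')^N N!^{s_2}|x_1^px_2^q|^N$, which is the $s_2$-Gevrey estimate (\ref{formula def asym x^pe^q}) establishing $\frac{1}{x_1^{pk}x_2^{qk}}\mathcal{L}_{\boldsymbol{\a}}(f)\sim_{s_2}^{(p,q)}\frac{1}{x_1^{pk}x_2^{qk}}\hat{\mathcal{L}}_{\boldsymbol{\a}}(\hat{f})$ on $D_k(a,b,M)$.
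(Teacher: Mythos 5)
Your proposal is correct and follows essentially the same route as the paper: identify the coefficients $h_n$ via Remark \ref{nota sobre Laplace formal y analitico} and hypothesis (1), push the asymptotic remainder of $f$ through the Laplace integral using hypothesis (2), evaluate the resulting Gamma integral to pick up the factor $\Gamma(1+N/k)\delta^{-1-N/k}$, and conclude with inequality (\ref{inq Gamma}). The only cosmetic difference is that the paper fixes $\theta$ and proves the bound on subsectors of $D_{k,\theta}(a,b,M)$ (letting $\theta$ range afterwards), whereas you let $\theta$ depend on the point to get the uniform lower bound on $\delta$; both devices serve the same purpose.
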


\begin{proof}We note that hypothesis (2) for $N=0$ is interpreted as $f$ being of exponential growth of order at most $\left(\frac{pk}{s}, \frac{qk}{1-s}\right)$ on $\Pi_{p,q}$. Let us write $R=M\left(|x_1|^{\frac{pk}{s}}+|x_2|^{\frac{qk}{1-s}}\right)$, $x_1^{pk}x_2^{qk}h=\mathcal{L}_{\boldsymbol{\a}}(f)$ and $\hat{T}_{p,q}\left(\frac{1}{x_1^{pk}x_2^{qk}}\hat{\mathcal{L}}_{\boldsymbol{\a}}(\hat{f})\right)=\sum h_n t^n$. Then using (1) and Remark \ref{nota sobre Laplace formal y analitico} we see that $(x_1^px_2^q)^{n+k}h_n=\mathcal{L}_{\boldsymbol{\a}}((\xi_1^p\xi_2^q)^nf_n)$.

Fixing $|\theta|<\pi/2$ it is enough to prove the result for subsectors contained in $D_{k,\theta}(a,b,M)$. If $\Pi_{p,q}''$ is one of them we can find $\delta>0$ small enough such that $R<\cos\theta-\delta,$ on $\Pi_{p,q}''$. Now let $\Pi_{p,q}'$ be a subsector of $\Pi_{p,q}$ such that $(x_1u^{\frac{s}{pk}}, x_2u^{\frac{1-s}{qk}})\in \Pi_{p,q}'$ if $(x_1,x_2)\in \Pi_{p,q}''$ and $u$ is on the half-line from $0$ to $\infty$ in direction $\theta$ . Using statement $(2)$ for $\Pi_{p,q}'$ we see that $$\left\|h(x_1,x_2)-\sum_{n=0}^{N-1}h_n(x_1,x_2)(x_1^px_2^q)^n\right\|\leq\frac{C}{\delta}\frac{A^N}{\delta^{N/k}}\Gamma(1+Ns_1)\Gamma\left(1+\frac{N}{k}\right)|x_1^px_2^q|^N,\text{ on }\Pi_{p,q}''.$$

An application of inequality (\ref{inq Gamma}) leads us to the result.
\end{proof}

Finally we introduce a natural convolution product that in our context shares similar properties with the classical one. Indeed, the \textit{convolution between $f$ and $g$ w.r.t. $x_1^px_2^q$ and weight $(s,1-s)$}, where $\boldsymbol{\a}=\left(\frac{s}{pk},\frac{1-s}{qk}\right)$ and $0<s<1$ is defined by $$(f\ast_{\boldsymbol{\a}} g)(x_1,x_2)=x_1^{pk}x_2^{qk}\int_0^1 f(x_1\tau^{\frac{s}{pk}},x_2\tau^{\frac{1-s}{qk}})g(x_1(1-\tau)^{\frac{s}{pk}},x_2(1-\tau)^{\frac{1-s}{qk}})d\tau.$$

We remark that this formula is already included implicitly in \cite{BalserMozo}, \cite{Balser3}. Some calculations show that this operation is bilinear, commutative and associative. With the aid of the Beta function we obtain the following formula valid for $\lambda_1, \lambda_2, \mu_1, \mu_2\in\C$ with positive real part $$\frac{x_1^{\lambda_1}x_2^{\mu_1}}{\Gamma\left(\frac{s}{pk}\lambda_1+\frac{1-s}{qk}\mu_1+1\right)}\ast_{\boldsymbol{\a}} \frac{x_1^{\lambda_2}x_2^{\mu_2}}{\Gamma\left(\frac{s}{pk}\lambda_2+\frac{1-s}{qk}\mu_2+1\right)}=\frac{x_1^{\lambda_1+\lambda_2+pk}x_2^{\mu_1+\mu_2+qk}}{\Gamma\left(\frac{s}{pk}(\lambda_1+\lambda_2)+\frac{1-s}{qk}(\mu_1+\mu_2)+2\right)}.$$

If $f,g$ are of exponential growth of order at most $\left(\frac{pk}{s}, \frac{qk}{1-s}\right)$ (resp. belong to $\mathcal{O}_b(S_{p,q},E)$) then the same is valid for $f\ast_{\boldsymbol{\a}}g$ and \begin{equation}\label{conv. product}
\mathcal{L}_{\boldsymbol{\a}}(f\ast_{\boldsymbol{\a}} g)=\mathcal{L}_{\boldsymbol{\a}}(f)\mathcal{L}_{\boldsymbol{\a}}(g), \hspace{0.5cm} \text{(resp. }\mathcal{B}_{\boldsymbol{\a}}(fg)=\mathcal{B}_{\boldsymbol{\a}}(f)\ast_{\boldsymbol{\a}}\mathcal{B}_{\boldsymbol{\a}}(g)\text{)},
\end{equation} as in the classical case.

\section{Monomial Borel-Laplace summation methods} \label{sumacion}

We can define a summation method based in the above Borel and Laplace transforms and we will see that it turns out to be equivalent to monomial summability.

\begin{defi}\label{def k-s_1,s_2-Borel sum}Assume $\hat{f}\in E[[x_1,x_2]]_{1/k}^{(p,q)}$ and set  $\hat{\varphi}_{s}=\hat{\mathcal{B}}_{\boldsymbol{\a}}(x_1^{pk}x_2^{qk}\hat{f})$, where $\boldsymbol{\a}=\left(\frac{s}{pk},\frac{1-s}{qk}\right)$ and $0<s<1$.  We will say that $\hat{f}$ is $k-(s,1-s)-$\textit{Borel summable in the monomial $x_1^px_2^q$ in direction} $d$ if $\hat{\varphi}_{s}$ can be analytically continued, say as $\varphi_{s}$, to a monomial sector of the form $S_{p,q}(d,2\epsilon)$ and being of exponential growth of order at most $\left(\frac{pk}{s}, \frac{qk}{1-s}\right)$ there. In this case the $k-(s,1-s)-$\textit{Borel sum of} $\hat{f}$ in direction $d$ is defined as $$f(x_1,x_2)=\frac{1}{x_1^{pk}x_2^{qk}}\mathcal{L}_{\boldsymbol{\a}}(\varphi_{s})(x_1,x_2).$$
\end{defi}

It is worth to note that the factor $x_1^{pk}x_2^{qk}$ that multiply the formal power series in the previous definition is used to avoid the use of power series with non-integer exponents as we have done so far. We remark the above notion is not modified if we remove this factor and the corresponding fraction in the $k-(s,1-s)-$Borel sum. Then we can prove directly from the definition that this summation method is stable by sums, products in case $E$ being a Banach algebra (using the convolution $\ast_{\boldsymbol{\a}}$, $\boldsymbol{\a}=\left(\frac{s}{pk},\frac{1-s}{qk}\right)$) and by the derivation $\frac{s}{pk} x_1\frac{\d}{\d x_1}+\frac{1-s}{qk} x_2\frac{\d}{\d x_2}$ (using formula (\ref{Borel s1 s2 k y derivadas})). Instead we compare this method with $k-$summability in the monomial $x_1^px_2^q$ in direction $d$.

Let $\hat{f}\in E\{x_1,x_2\}_{1/k,d}^{(p,q)}$ and $f\in\mathcal{O}(S_{p,q}(d,\pi/k+2\epsilon, R_0),E)$ its $k-$sum in $x_1^px_2^q$ in direction $d$. For a fixed $0<s<1$ put  $\varphi_{s}=\mathcal{B}_{\boldsymbol{\a}}(x_1^{pk}x_2^{qk} f)$ and $\hat{\varphi}_{s}=\hat{\mathcal{B}}_{\boldsymbol{\a}}(x_1^{pk}x_2^{qk}\hat{f})$.
We know that $\hat{\varphi}_{s}$ is a convergent power series in some $D_r^2$ and by Proposition \ref{exp growth of Borel k, s1 s2} we see that $\varphi_{s}\sim_0^{(p,q)} \hat{\varphi}_{s}$ on $S_{p,q}(d,2\epsilon)$. These two properties imply that $\varphi_{s}$ coincides with the sum of $\hat{\varphi}_{s}$ on $S_{p,q}(d,2\epsilon)\cap D_r^2$. In other words $\hat{\varphi}_{s}$ can be analytically continued with exponential growth of order at most $\left(\frac{pk}{s}, \frac{qk}{1-s}\right)$ on $S_{p,q}(d,2\epsilon)$ and therefore $\hat{f}$ is $k-(s,1-s)-$Borel summable in the monomial $x_1^px_2^q$ in direction $d$. Since $\mathcal{B}_{\boldsymbol{\a}}$ and $\mathcal{L}_{\boldsymbol{\a}}$ are inverses of each other the $k-(s,1-s)-$Borel sum of $\hat{f}$ is $f$.

Conversely, let $\hat{f}$ be $k-(s,1-s)-$Borel summable in the monomial $x_1^px_2^q$ in direction $d$ and let $\varphi_{s}\in\mathcal{O}(S_{p,q}(d, 2\epsilon),E)$ and $\hat{\varphi}_{s}$ be as in Definition \ref{def k-s_1,s_2-Borel sum}. Let us write $R(\xi_1,\xi_2)=|\xi_1|^{\frac{pk}{s}}+|\xi_2|^{\frac{qk}{1-s}}$ and $\hat{T}_{p,q}(\hat{\varphi}_s)=\sum \varphi_n \tau^n$. By hypothesis there are positive constants $D',M'$ such that $\|\varphi_s(\xi_1,\xi_2)\|\leq D'e^{M'R(\xi_1,\xi_2)},$ on $S_{p,q}(d,2\epsilon')$, $0<\epsilon'<\epsilon$ and from Remark \ref{Borel an T_pq} we know that all the $\varphi_n$ are entire maps and there are positive constants $D,B,M$ such that $\|\varphi_n(\xi_1,\xi_2)\|\leq DB^n e^{MR(\xi_1,\xi_2)},$ on $\C^2$. Enlarging the constants if necessary we may assume that $D=D'$ and $M=M'$.

To be able to apply Proposition \ref{Laplace k s_1 s_2 properties}, it is enough to prove that there are positive constants $C,A$ such that for all $N\in\N$ we have \begin{equation}\label{cond eq BL y MS}
\left\|\varphi_{s}(\xi_1,\xi_2)-\sum_{n=0}^{N-1} \varphi_n(\xi_1,\xi_2) (\xi_1^p\xi_2^q)^n\right\|\leq CA^N|\xi_1^p\xi_2^q|^Ne^{MR(\xi_1,\xi_2)},\hspace{0.3cm} \text{ on }S_{p,q}(d,2\epsilon').\end{equation}

Since $\hat{\varphi}_{s}$ is the convergent Taylor's series of $\varphi_{s}$ at the origin then (\ref{cond eq BL y MS}) is satisfied for all $|\xi_1|, |\xi_2|\leq R$ for some $R>0$. Additionally, due to the growth of the functions $\varphi_n$, the series of maps $\sum \varphi_n(\xi_1, \xi_2) (\xi_1^p\xi_2^q)^n$ converges absolutely in compacts of the set of $(\xi_1,\xi_2)$ such that $B|\xi_1^p\xi_2^q|<1$. Then $\varphi_{s}$ can be analytically continued there through this series. It follows that if $B|\xi_1^p\xi_2^q|<1/2$ then inequality (\ref{cond eq BL y MS}) is also satisfied. On the other hand the inequalities of the previous paragraph show that the left side of (\ref{cond eq BL y MS}) is bounded by  $$De^{MR(\xi_1,\xi_2)}+\sum_{n=0}^{N-1} DB^n |\xi_1^p\xi_2^q|^ne^{MR(\xi_1,\xi_2)}\hspace{0.3cm} \text{ on } S_{p,d}(d,2\epsilon').$$

\noindent If $1/2\leq B|\xi_1^p\xi_2^q|\leq 2$ the last expression is bounded by $2^NDe^{MR(\xi_1,\xi_2)}\leq D(4B)^N|\xi_1^p\xi_2^q|^Ne^{MR(\xi_1,\xi_2)}$. If $B|\xi_1^p\xi_2^q|>2$, we can bound it by $$De^{MR(\xi_1,\xi_2)}+D\frac{B^N|\xi_1^p\xi_2^q|^N-1}{B|\xi_1^p\xi_2^q|-1}e^{MR(\xi_1,\xi_2)}< D B^N|\xi_1^p\xi_2^q|^Ne^{MR(\xi_1,\xi_2)},$$ and thus (\ref{cond eq BL y MS}) is valid in all the cases with $C,A$ large enough. Applying Proposition \ref{Laplace k s_1 s_2 properties} to $\varphi_{s}$, $\hat{\varphi}_s$ and $s_1=0$ we conclude that $$f(x_1,x_2)=\frac{1}{x_1^{pk}x_2^{qk}}\mathcal{L}_{\boldsymbol{\a}}(\varphi_{s})\sim_{1/k}^{(p,q)} \frac{1}{x_1^{pk}x_2^{qk}}\hat{\mathcal{L}}_{\boldsymbol{\a}}(\hat{\varphi}_s)=\hat{f}, \hspace{0.3cm} \text{ on } D_k(d-\epsilon, d+\epsilon,M).$$

In conclusion, $\hat{f}$ is $k-$summable in $x_1^px_2^q$ in direction $d$ and the $k-$sum can be found through the $k-$Laplace transform w.r.t. $x_1^px_2^q$ and weight $(s,1-s)$ of $\varphi_s$. These considerations prove the following theorem, we consider one of the main results of this work.

\begin{teor}\label{Monomial summability t Borel Laplace}Let $\hat{f}$ be a $1/k-$Gevrey series in the monomial $x_1^px_2^q$. Then it is equivalent:
\begin{enumerate}
\item $\hat{f}\in E\{x_1,x_2\}_{1/k,d}^{(p,q)}$,

\item There is $0<s<1$ such that $\hat{f}$ is $k-(s,1-s)-$Borel summable in the monomial $x_1^px_2^q$ in direction $d$.

\item For all $0<s<1$, $\hat{f}$ is $k-(s,1-s)-$Borel summable in the monomial $x_1^px_2^q$ in direction $d$.
\end{enumerate}
In all cases the corresponding sums coincide.
\end{teor}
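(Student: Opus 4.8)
The plan is to prove the cyclic chain of implications $(1)\Rightarrow(3)\Rightarrow(2)\Rightarrow(1)$, using the two propositions on the behaviour of $\mathcal{B}_{\boldsymbol{\a}}$ and $\mathcal{L}_{\boldsymbol{\a}}$ under monomial asymptotic expansions together with the fact that these transforms are mutually inverse. Since $(3)\Rightarrow(2)$ is trivial, the content lies in the two remaining arrows, and the discussion preceding the statement already carries out essentially all of the work.

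For $(1)\Rightarrow(3)$, I would assume $\hat{f}$ is $k$-summable in $x_1^px_2^q$ in direction $d$ with $k$-sum $f$ holomorphic on some $S_{p,q}(d,\pi/k+2\epsilon,R_0)$, fix an arbitrary $s\in(0,1)$, and set $\varphi_s=\mathcal{B}_{\boldsymbol{\a}}(x_1^{pk}x_2^{qk}f)$ and $\hat{\varphi}_s=\hat{\mathcal{B}}_{\boldsymbol{\a}}(x_1^{pk}x_2^{qk}\hat{f})$. Because $\hat{f}$ is $1/k$-Gevrey in the monomial, the observation following (\ref{segment of line}) guarantees that $\hat{\varphi}_s$ is convergent; and Proposition \ref{exp growth of Borel k, s1 s2} applied with $s_1=1/k$ (hence $s_2=0$) gives $\varphi_s\sim_0^{(p,q)}\hat{\varphi}_s$ on $S_{p,q}(d,2\epsilon)$ with exponential growth of order at most $\left(\frac{pk}{s},\frac{qk}{1-s}\right)$. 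A $0$-Gevrey asymptotic expansion equal to a convergent series forces $\varphi_s$ to coincide with the sum of $\hat{\varphi}_s$ on the overlap, so $\varphi_s$ is the analytic continuation of $\hat{\varphi}_s$ to $S_{p,q}(d,2\epsilon)$ with the required growth. This is exactly the definition of $k$-$(s,1-s)$-Borel summability, and since $s$ was arbitrary we obtain $(3)$.

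For $(2)\Rightarrow(1)$, I would take $s$ for which $\hat{f}$ is $k$-$(s,1-s)$-Borel summable, so that $\hat{\varphi}_s$ continues to some $\varphi_s\in\mathcal{O}(S_{p,q}(d,2\epsilon),E)$ of exponential growth of order at most $\left(\frac{pk}{s},\frac{qk}{1-s}\right)$, and verify the hypotheses of Proposition \ref{Laplace k s_1 s_2 properties} with $s_1=0$. Hypothesis (1), that the components $\varphi_n$ of $\hat{T}_{p,q}(\hat{\varphi}_s)$ are entire with the stated bounds, is provided by Remark \ref{Borel an T_pq}. The crux is hypothesis (2), namely the tail estimate (\ref{cond eq BL y MS}): I would prove it by splitting the monomial sector according to the size of $B|\xi_1^p\xi_2^q|$ --- near the origin the convergent Taylor series of $\varphi_s$ gives it directly, on the annulus $1/2\le B|\xi_1^p\xi_2^q|\le 2$ a crude bound of the partial sums suffices, and for $B|\xi_1^p\xi_2^q|>2$ one sums the geometric tail --- so that in all three regions the difference is controlled by $CA^N|\xi_1^p\xi_2^q|^Ne^{MR(\xi_1,\xi_2)}$. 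Proposition \ref{Laplace k s_1 s_2 properties} then yields $\frac{1}{x_1^{pk}x_2^{qk}}\mathcal{L}_{\boldsymbol{\a}}(\varphi_s)\sim_{1/k}^{(p,q)}\hat{f}$ on a monomial sector of opening greater than $\pi/k$ bisected by $d$, which is $(1)$.

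Finally, the coincidence of sums is automatic: in $(1)\Rightarrow(3)$ the transforms are mutually inverse, so $\frac{1}{x_1^{pk}x_2^{qk}}\mathcal{L}_{\boldsymbol{\a}}(\varphi_s)=\frac{1}{x_1^{pk}x_2^{qk}}\mathcal{L}_{\boldsymbol{\a}}\mathcal{B}_{\boldsymbol{\a}}(x_1^{pk}x_2^{qk}f)=f$, while in $(2)\Rightarrow(1)$ the $k$-sum is by construction the $k$-$(s,1-s)$-Borel sum. The main obstacle is the uniform tail estimate (\ref{cond eq BL y MS}) feeding hypothesis (2) of Proposition \ref{Laplace k s_1 s_2 properties}; once it is in place, the remaining steps are direct applications of the two propositions and of the inverse relationship $\mathcal{L}_{\boldsymbol{\a}}\mathcal{B}_{\boldsymbol{\a}}=\mathrm{id}$.
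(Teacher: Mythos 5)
Your proposal is correct and follows essentially the same route as the paper: $(1)\Rightarrow(3)$ via Proposition \ref{exp growth of Borel k, s1 s2} with $s_1=1/k$ and the uniqueness of $0$-Gevrey asymptotics against a convergent series, and $(2)\Rightarrow(1)$ via Remark \ref{Borel an T_pq} plus the three-region case analysis on $B|\xi_1^p\xi_2^q|$ to establish the tail estimate (\ref{cond eq BL y MS}) needed for Proposition \ref{Laplace k s_1 s_2 properties} with $s_1=0$. The identification of the sums through $\mathcal{L}_{\boldsymbol{\a}}\mathcal{B}_{\boldsymbol{\a}}=\mathrm{id}$ also matches the paper's argument.
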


We remark that some properties of monomial summability can be obtained easily from this theorem. For instance $E\{x_1.x_2\}^{(p,q)}_{1/k,d}=E\{x_1.x_2\}^{(Np,Nq)}_{N/k,Nd}$ for all $p,q,N\in \N^*$ and all directions $d$. Indeed, for any $0<s<1$, $\mathcal{B}_{\boldsymbol{\a}}$ and $\mathcal{L}_{\boldsymbol{\a}}$ are independent of $N$, since $\boldsymbol{\a}=\left(\frac{s}{pk},\frac{1-s}{qk}\right)=\left(\frac{s}{(Np)\frac{k}{N}},\frac{1-s}{(Nq)\frac{k}{N}}\right)$. It can also be shown directly that if $\hat{f}\in E\{x_1,x_2\}^{(p,q)}_{1/k}$ has no singular directions then it is convergent.



\section{Application to a family of partial differential equations} \label{ejemplos}

As mentioned in the introduction monomial summability is a useful tool to understand summability properties of solutions of doubly singular equations, i.e. differential equations of type \begin{equation}\label{SPDE}
\varepsilon^q x^{p+1}\frac{\d \textbf{y}}{\d x}=\textbf{F}(x,\varepsilon,\textbf{y}),\hspace{0.3cm} \textbf{F}(0,0,\textbf{0})=\textbf{0},
\end{equation} where $p,q\in\N_{>0}$ and $\textbf{F}$ is a $\C^l-$valued holomorphic map in some neighborhood of $(0,0,\textbf{0})$. In fact, in \cite{Monomial summ} it is proved that if $\frac{\d F}{\d \textbf{y}}(0,0,\textbf{0})$ is invertible then the unique formal solution of this equation is $1-$summable in $x^p\varepsilon^q$. These equations were also studied in \cite{BalserMozo} under the same assumptions when $q=1$ and $\textbf{F}$ depends linearly of $\textbf{y}$ considering summability in $x$, in $\e$ and what the authors refer to $(s_1,s_2)-$summability, $ps_1+s_2=1$. Using Theorem \ref{Monomial summability t Borel Laplace} the relation between monomial summability and this $(s_1,s_2)-$summability is clarified and thus the results in \cite{BalserMozo} are valid for general $q$ and $\textbf{F}$.

Taking into account the Borel-Laplace methods introduced in the previous section and in view of formula (\ref{Borel s1 s2 k y derivadas}) we can study under the light of monomial summability formal solutions of the family of partial differential equations \begin{equation}\label{EP principal s1 s2}
x_1^px_2^q\left(\frac{s}{p}x_1\frac{\d \textbf{y}}{\d x_1}+\frac{1-s}{q}x_2\frac{\d \textbf{y}}{\d x_2}\right)=C(x_1,x_2)\textbf{y}+\gamma(x_1,x_2),
\end{equation} where $p,q\in\N_{>0}$, $0<s<1$ and $C$ (resp. $\gamma$) is a square matrix of size $l$ (resp. vector of size $l$) with entries holomorphic functions in some polydisc $D_r^2$ at the origin in $\C^2$. Note that this equation reduces to (\ref{SPDE}) when $s=0$ or $s=1$ and $\textbf{F}(x_1,x_2,\textbf{y})=C(x_1,x_2)\textbf{y}+\gamma(x_1,x_2)$. This will be the only case considered here.

The existence, uniqueness and Gevrey growth of formal solutions of (\ref{EP principal s1 s2}) in contained in the next proposition.

\begin{prop}\label{EP1 has 1 pq gevrey solution} If $C(0,0)$ is invertible then equation (\ref{EP principal s1 s2}) has a unique solution $\normalfont\hat{\textbf{y}}$$=\sum y_{n,m}x_1^nx_2^m$. Moreover $\normalfont \hat{\textbf{y}}$ $\in \left(\C[[x_1,x_2]]^{(p,q)}_1\right)^l$.
\end{prop}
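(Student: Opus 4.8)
The plan is to prove existence and uniqueness by substituting a formal power series $\hat{\textbf{y}}=\sum y_{n,m}x_1^nx_2^m$ into equation (\ref{EP principal s1 s2}) and deriving a recursion for the coefficients $y_{n,m}$. Writing $C(x_1,x_2)=\sum C_{i,j}x_1^ix_2^j$ and $\gamma(x_1,x_2)=\sum \gamma_{i,j}x_1^ix_2^j$, and observing that the differential operator $\frac{s}{p}x_1\frac{\d}{\d x_1}+\frac{1-s}{q}x_2\frac{\d}{\d x_2}$ acts on $x_1^nx_2^m$ as multiplication by $\frac{sn}{p}+\frac{(1-s)m}{q}$, the left-hand side applied to $x_1^nx_2^m$ produces a term supported in degree $(n+p,m+q)$. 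First I would collect coefficients of $x_1^Nx_2^M$ on both sides. For low degrees (those with $N<p$ or $M<q$) the left-hand side contributes nothing, so $C_{0,0}y_{N,M}$ is determined purely by lower-order data from $\gamma$ and the off-diagonal part of $C$; since $C(0,0)=C_{0,0}$ is invertible, these coefficients are uniquely solvable. For higher degrees the recursion reads, schematically,
\begin{equation*}
C_{0,0}\,y_{N,M}=\left(\frac{s(N-p)}{p}+\frac{(1-s)(M-q)}{q}\right)y_{N-p,M-q}+\Bigl(\text{terms in }y_{n,m}\text{ with }n\le N,\,m\le M,\,(n,m)\ne(N,M)\Bigr),
\end{equation*}
and again invertibility of $C_{0,0}$ lets us solve uniquely for $y_{N,M}$. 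This establishes existence and uniqueness of $\hat{\textbf{y}}$ as a formal series.

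The substantive part is the Gevrey estimate, namely that $\hat{\textbf{y}}\in\left(\C[[x_1,x_2]]^{(p,q)}_1\right)^l$. By the criterion recalled after (\ref{para definir Tpq}) it suffices to exhibit constants $C,A>0$ with $\|y_{n,m}\|\le CA^{n+m}\min\{n!^{1/p},m!^{1/q}\}$. The key feature driving the $1/p$ and $1/q$ Gevrey growth is the factor $\frac{s(N-p)}{p}+\frac{(1-s)(M-q)}{q}$ multiplying $y_{N-p,M-q}$ in the recursion: solving for $y_{N,M}$ introduces $C_{0,0}^{-1}$ times a linear-in-$(N,M)$ factor times $y_{N-p,M-q}$, so iterating the index shift by $(p,q)$ accumulates a product of roughly $N/p\approx M/q$ linear factors, which is precisely what produces factorial growth of order $n!^{1/p}$ in $x_1$ and $m!^{1/q}$ in $x_2$.

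The plan is to make this rigorous by the method of majorant series, or equivalently by working through $\hat{T}_{p,q}$ and reducing to a one-variable Gevrey-$1$ estimate in $t=x_1^px_2^q$. I would set up a majorant equation in which $C(x_1,x_2)$ is replaced by a scalar majorant of $C_{0,0}^{-1}(C-C_{0,0})$ and of $C_{0,0}^{-1}\gamma$, and prove by induction on $n+m$ that $\|y_{n,m}\|$ is dominated by an explicit sequence satisfying the expected bound. Concretely, I would posit a bound of the form $\|y_{n,m}\|\le CA^{n+m}\Gamma(1+n/p)$ (and symmetrically with $\Gamma(1+m/q)$, taking the minimum), feed it into the recursion, and check that the inductive step closes provided $A$ is chosen large relative to the radius of convergence of $C$ and $\gamma$ and relative to $\|C_{0,0}^{-1}\|$; here the linear factor $\frac{s(N-p)}{p}+\frac{(1-s)(M-q)}{q}$ is absorbed into the ratio $\Gamma(1+N/p)/\Gamma(1+(N-p)/p)=N/p$ coming from the index shift. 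The main obstacle will be controlling the convolution sums arising from the product $C(x_1,x_2)\textbf{y}$: one must verify that the double sum of lower-order contributions, each individually bounded by the inductive hypothesis, sums to something still dominated by $CA^{N+M}\min\{N!^{1/p},M!^{1/q}\}$, which requires the standard majorant estimate that the convolution of the coefficient sequence of a convergent series against a Gevrey sequence stays within the same Gevrey class, together with the subadditivity-type inequality (\ref{inq Gamma}) to combine the $\Gamma$-factors correctly.
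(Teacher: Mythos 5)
Your existence--uniqueness argument is fine and coincides with the paper's. The gap is in the Gevrey estimate, specifically in the claim that the linear factor $\frac{s(N-p)}{p}+\frac{(1-s)(M-q)}{q}$ multiplying $y_{N-p,M-q}$ ``is absorbed into the ratio $\Gamma(1+N/p)/\Gamma(1+(N-p)/p)=N/p$''. That factor contains the term $\frac{(1-s)(M-q)}{q}$, which for fixed $N$ is unbounded in $M$, whereas the gain from the index shift in your ansatz $\|y_{n,m}\|\leq CA^{n+m}\Gamma(1+n/p)$ is only $A^{p+q}\cdot N/p$. So the inductive step does not close for $M/q\gg N/p$; the same obstruction kills the version with $\min\{\Gamma(1+n/p),\Gamma(1+m/q)\}$, since the per-step loss is a convex combination of $N/p$ and $M/q$ while the per-step gain in the minimum is only $\min\{N/p,M/q\}$. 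The only weight for which the shift term telescopes cleanly is $\Gamma\left(1+\frac{sN}{p}+\frac{(1-s)M}{q}\right)$, and by (\ref{prop the segment}) that only proves membership in $\C[[x_1,x_2]]_{(s/p,(1-s)/q)}$, which strictly contains $\C[[x_1,x_2]]_1^{(p,q)}$ --- it is not the statement to be proved.

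The paper avoids this by never running a two-index coefficient recursion for the Gevrey part. It expands in one variable only, $\hat{\textbf{y}}=\sum y_n(x_2)x_1^n$ with $y_n$ holomorphic on a disc, obtaining the recursion (\ref{EP principal en x s1 s2}) in which the troublesome $M$-dependence appears instead as the derivative term $\frac{1-s}{q}x_2^{q+1}y_{n-p}'(x_2)$; this term is then controlled with Nagumo norms (as in \cite{BalserMozo}, \cite{CDRSS}), whose defining property is precisely that differentiation costs a factor proportional to the Nagumo index, which here grows linearly in $n$ only. This yields the $(1/p,0)$-Gevrey bound uniformly in $x_2$, the symmetric argument gives $(0,1/q)$, and the conclusion follows from the identity (\ref{segment of line}). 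To repair your proof you would need to import this (or an equivalent device, e.g.\ a majorant weight depending on a shrinking family of radii in $x_2$); the ``standard majorant estimate'' for convolutions that you invoke only handles the perturbative term $C(x_1,x_2)\textbf{y}$, not the principal term coming from the differential operator.
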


\begin{proof}The existence and uniqueness of $\hat{\textbf{y}}$ follows by expanding $C$ and $\gamma$ in power series at the origin, writing down the corresponding equations in each power and solving recursively the coefficients.

For the Gevrey growth of the solution, for instance for the variable $x_1$, if we write $\hat{\textbf{y}}=\sum y_{n}(x_2)x_1^n$, $C=\sum C_{n,m}x_1^nx_2^m=\sum C_{n}(x_2) x_1^n$ and $\gamma=\sum\gamma_{n,m}x_1^nx_2^m=\sum \gamma_{n}(x_2) x_1^n$ replacing these expressions in the equation we see that
\begin{equation}\label{EP principal en x s1 s2}
\frac{s}{p}(n-p)x_2^q y_{n-p}(x_2)+\frac{1-s}{q}x_2^{q+1}y'_{n-p}(x_2)=\sum_{i=0}^n C_{n-i}(x_2) y_{i}(x_2)+\gamma_{n}(x_2),\hspace{0.2cm} n\in\N.
\end{equation}

\noindent Note that $C_{0}(x_2)$ is also invertible for $|x_2|< r$ (reducing $r$ if necessary). The coefficients $y_{n}$ are uniquely determined by these equations and are holomorphic on $D_r$. Following the same lines as in \cite[Sec. 2]{BalserMozo} with the aid of Nagumo norms (see \cite{CDRSS}) we can conclude that $\hat{\textbf{y}}\in (\C[[x_1,x_2]]_{(1/p,0)})^l$. The same argument for the variable $x_2$ let us conclude that $(\C[[x_1,x_2]]_{(0,1/q)})^l$ and the result follows by equality (\ref{segment of line}) in Section \ref{Sumabilidad}.
\end{proof}

\begin{teor}\label{EP s1 s2 1 summable in xpeq}The unique formal solution $\normalfont\hat{\textbf{y}}$ given by the previous proposition is $1-$summable in $x_1^px_2^q$. The possible singular directions are the ones passing through the eigenvalues of $C(0,0)$.
\end{teor}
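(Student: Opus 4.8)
The plan is to transform equation (\ref{EP principal s1 s2}) by the Borel transform of Section \ref{BorelyLaplace} and then reduce everything to the equivalence of Theorem \ref{Monomial summability t Borel Laplace}. The starting observation is that, with $k=1$, the left--hand side of (\ref{EP principal s1 s2}) is exactly $X_{\boldsymbol{\a}}(\textbf{y})$ for the vector field $X_{\boldsymbol{\a}}=x_1^px_2^q\left(\frac{s}{p}x_1\frac{\d}{\d x_1}+\frac{1-s}{q}x_2\frac{\d}{\d x_2}\right)$ attached to the weight $\boldsymbol{\a}=\left(\frac{s}{p},\frac{1-s}{q}\right)$. By Proposition \ref{EP1 has 1 pq gevrey solution} the formal solution $\hat{\textbf{y}}$ is $1$-Gevrey in the monomial $x_1^px_2^q$, so Theorem \ref{Monomial summability t Borel Laplace} is available: it suffices to fix one $0<s<1$ and show that $\hat{\textbf{y}}$ is $1-(s,1-s)-$Borel summable in every direction $d$ outside a finite set, and the implication $(2)\Rightarrow(1)$ then yields $1$-summability in $x_1^px_2^q$.

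Following Definition \ref{def k-s_1,s_2-Borel sum} I would set $\hat{\varphi}_s=\hat{\mathcal{B}}_{\boldsymbol{\a}}(x_1^px_2^q\hat{\textbf{y}})$, which is convergent near the origin by the Gevrey property and the remark following (\ref{borel lambda mu}). To find the equation it satisfies, write $C=C(0,0)+\tilde C$ with $\tilde C(0,0)=0$ and apply $\mathcal{B}_{\boldsymbol{\a}}$ to (\ref{EP principal s1 s2}). By (\ref{Borel s1 s2 k y derivadas}) the operator $X_{\boldsymbol{\a}}$ becomes multiplication by $\xi_1^p\xi_2^q$, the constant matrix $C(0,0)$ passes through by linearity, and the product $\tilde C\,\textbf{y}$ becomes a convolution through (\ref{conv. product}). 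Schematically this produces, for the Borel transform $U$ of $\textbf{y}$, a convolution equation of the form
$$\left(\xi_1^p\xi_2^q\,I-C(0,0)\right)U=\mathcal{B}_{\boldsymbol{\a}}(\tilde C)\ast_{\boldsymbol{\a}}U+G,$$
where $G$ collects the transform of $\gamma$; the object $\hat{\varphi}_s$ is then recovered from $U$ after the bookkeeping of the factor $(\xi_1^p\xi_2^q)^{-1}$ built into $\mathcal{B}_{\boldsymbol{\a}}$.

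The heart of the argument is the analytic continuation of this solution with the correct growth. Away from the points where $\xi_1^p\xi_2^q$ equals an eigenvalue of $C(0,0)$, the matrix $\xi_1^p\xi_2^q\,I-C(0,0)$ is invertible with resolvent of size $O(|\xi_1^p\xi_2^q|^{-1})$ for large $|\xi_1^p\xi_2^q|$, so the equation can be recast as a fixed point $U=(\xi_1^p\xi_2^q\,I-C(0,0))^{-1}(\mathcal{B}_{\boldsymbol{\a}}(\tilde C)\ast_{\boldsymbol{\a}}U+G)$. Since $\tilde C$ is convergent, $\mathcal{B}_{\boldsymbol{\a}}(\tilde C)$ is entire of exponential growth order $\left(\frac{p}{s},\frac{q}{1-s}\right)$, and the convolution $\ast_{\boldsymbol{\a}}$ both raises the vanishing order and preserves this exponential type; thus a successive--approximation (majorant) scheme, in the spirit of the Nagumo--norm estimates already used in Proposition \ref{EP1 has 1 pq gevrey solution} and the growth bounds of Proposition \ref{exp growth of Borel k, s1 s2}, should continue $\hat{\varphi}_s$ to a sector $S_{p,q}(d,2\epsilon)$ with exponential growth at most $\left(\frac{p}{s},\frac{q}{1-s}\right)$, for every direction $d$ with $d\neq\arg\lambda$ for all eigenvalues $\lambda$ of $C(0,0)$.

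Once this is achieved, $\hat{\textbf{y}}$ is $1-(s,1-s)-$Borel summable in each such $d$, with Borel sum holomorphic and of the expected asymptotics by Proposition \ref{Laplace k s_1 s_2 properties}. Theorem \ref{Monomial summability t Borel Laplace} then gives $1$-summability in $x_1^px_2^q$ in every direction except possibly the finitely many $d=\arg\lambda$, which are therefore the only possible singular directions. I expect the main obstacle to be exactly the uniform exponential--growth control of the iteration across a non--singular ray: keeping the constants in the convolution estimates independent of the iteration depth, and handling the resolvent singularity as $\xi_1^p\xi_2^q$ approaches the eigenvalues. The reduction via Theorem \ref{Monomial summability t Borel Laplace} and the algebraic identities (\ref{Borel s1 s2 k y derivadas}) and (\ref{conv. product}) are the routine part.
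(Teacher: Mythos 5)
Your overall strategy is exactly the paper's: pass to the Borel plane, derive the convolution equation $\left(\xi_1^p\xi_2^q I-C(0,0)\right)U=\mathcal{B}_{\boldsymbol{\a}}(\widetilde{C})\ast_{\boldsymbol{\a}}U+G$, continue its solution to monomial sectors avoiding the directions $\arg\lambda_j$ with exponential growth of order $\left(\frac{p}{s},\frac{q}{1-s}\right)$, and invoke Theorem \ref{Monomial summability t Borel Laplace}. But the step you defer --- making the successive approximation actually converge --- is the substance of the proof, and as stated your plan has a genuine gap there. The operator $\mathcal{H}(\textbf{F})=\left(\xi_1^p\xi_2^q I-C(0,0)\right)^{-1}\left(\mathcal{B}_{\boldsymbol{\a}}(\widetilde{C})\ast_{\boldsymbol{\a}}\textbf{F}+G\right)$ is \emph{not} in general a contraction on the naive spaces (bounded holomorphic functions on a relatively compact $U\subset\Omega$, or functions of the stated exponential growth on an unbounded sector): the Lipschitz constant one obtains is essentially $M_U K_U$ (resolvent bound times the sup of $|\mathcal{B}_{\boldsymbol{\a}}(\widetilde{C})|$ over the integration), which has no reason to be less than $1$. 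Saying that the convolution ``raises the vanishing order and preserves the exponential type'' is an order-by-order observation; it does not by itself produce a single Banach space on which the iteration converges.

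The paper supplies a quantitative smallness mechanism in each of the two regimes, and this is exactly what is missing from your sketch. For continuation to a bounded $U$ with $\overline{U}\subset\Omega$, one first subtracts from $\textbf{w}$ its $x_1$-jet up to order $N$, so the new inhomogeneity $g_N$ has $\xi_1$-order greater than $N$, and works in the space with norm $\|\textbf{F}\|_N=\sup_{\overline{U}}|\textbf{F}|/|\zeta_1|^N$; the convolution then carries the factor $\int_0^1 t^{\frac{ns}{p}+\frac{m(1-s)}{q}-1}(1-t)^{Ns/p}\,dt\leq D N^{-a}$ with $a=\min\{s/p,(1-s)/q\}$ (Lemma \ref{bound the integral}), so $\mathcal{H}_N$ becomes a contraction once $DM_UK_U<N^a$. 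For the growth on unbounded sectors one works instead with the weight $e^{-KR(\zeta_1,\zeta_2)}$, $R=\max\{|\zeta_1|^{p/s},|\zeta_2|^{q/(1-s)}\}$, where the estimate $\int_0^1 t^{\frac{ns}{p}+\frac{m(1-s)}{q}-1}e^{KR(1-t)}\,dt\leq e^{KR}\,\Gamma\!\left(\tfrac{ns}{p}+\tfrac{m(1-s)}{q}\right)(KR)^{-\frac{ns}{p}-\frac{m(1-s)}{q}}$ yields a Lipschitz constant $O(K^{-a})$, made small by taking $K$ large. These two devices are precisely the ``uniform control of the iteration'' you flag as the main obstacle; without them the argument does not close. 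The algebraic reduction and the final appeal to Theorem \ref{Monomial summability t Borel Laplace} are, as you say, the routine part.
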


\begin{proof}By Theorem \ref{Monomial summability t Borel Laplace} it is enough to show that $\hat{\textbf{y}}$ is $1-(s,1-s)-$Borel summable in the monomial $x_1^px_2^q$. To simplify notations we will write $\mathcal{B}=\mathcal{B}_{\boldsymbol{\alpha}}$, $\hat{\mathcal{B}}=\hat{\mathcal{B}}_{\boldsymbol{\alpha}}$ and $\ast=\ast_{\boldsymbol{\alpha}}$, where $\boldsymbol{\a}=\left(\frac{s}{pk},\frac{1-s}{qk}\right)$. We divide the proof in three steps.

\textit{Step 1. Setting the associated convolution equation.} If $\textbf{w}=x_1^px_2^q \textbf{y}$, equation (\ref{EP principal s1 s2}) is transformed into equation \begin{equation}\label{EP principal s1 s2 2}
x_1^px_2^q\left(\frac{s}{p}x_1\frac{\d \textbf{w}}{\d x_1}+\frac{1-s}{q}x_2\frac{\d \textbf{w}}{\d x_2}\right)=\left(x_1^px_2^qI+C(x_1,x_2)\right)\textbf{w}+x_1^px_2^q\gamma(x_1,x_2),
\end{equation} which is solved formally by $\hat{\textbf{w}}=x_1^px_2^q\hat{\textbf{y}}$. Here $I$ denotes the identity matrix of size $l$. If we apply $\mathcal{B}$ to this equation, using formulas (\ref{Borel s1 s2 k y derivadas}) and (\ref{conv. product}) we see that $\hat{\mathcal{B}}(\hat{\textbf{w}})$, holomorphic at $(0,0)$, is a solution of the convolution equation \begin{equation}\label{EP convolution w}
(\xi_1^p\xi_2^qI-C(0,0))\textbf{F}(\xi_1,\xi_2)=\mathcal{B}(\widetilde{C})\ast \textbf{F}(\xi_1,\xi_2)+g(\xi_1,\xi_2),
\end{equation}

\noindent where $\widetilde{C}(x_1,x_2)=x_1^px_2^qI+C(x_1,x_2)-C(0,0)$ and $g=\mathcal{B}(x_1^px_2^q\gamma)$. Note that \begin{equation}\label{EP g y B_p(A)}
g=\sum\frac{\gamma_{n,m}}{\Gamma\left(1+\frac{ns}{p}+\frac{m(1-s)}{q}\right)}\xi_1^{n}\xi_2^m,\hspace{0.2cm}\mathcal{B}(\widetilde{C})=\sum_{(n,m)\neq (0,0)} \frac{\overline{C}_{n,m}}{\Gamma\left(\frac{ns}{p}+\frac{m(1-s)}{q}\right)}\xi_1^{n-p}\xi_2^{m-q},
\end{equation} where $\overline{C}_{n,m}=C_{n,m}$ for $(n,m)\neq (p,q)$ and $\overline{C}_{p,q}=C_{p,q}+I$.

Let $\lambda_1,...,\lambda_l$ be the eigenvalues of $C(0,0)$ repeated according to their multiplicity. They are all non-zero by assumption. Working on $\Omega=\{(\xi_1,\xi_2)\in \C^2 \hspace{0.1cm}|\hspace{0.1cm} \xi_1^p\xi_2^q\neq\lambda_j\text{ for all }j=1,...,l\}$, the open set where the matrix $\xi_1^p\xi_2^qI-C(0,0)$ is invertible, we see that finding solutions of certain type of the convolution equation (\ref{EP convolution w}) is equivalent to finding a fixed point of the operator $\mathcal{H}$ given by \begin{equation}\label{EP operator}
\mathcal{H}(\textbf{F})(\xi_1,\xi_2)=\left(\xi_1^p\xi_2^qI-C(0,0)\right)^{-1}\left(\mathcal{B}(\widetilde{C})\ast\textbf{F}(\xi_1,\xi_2)+g(\xi_1,\xi_2)\right),
\end{equation} and defined in an adequate Banach space.

\textit{Step 2. Establishing the analytic continuation of $\hat{\mathcal{B}}(\hat{\textbf{w}})$.} Let $U$ be a bounded open such that $\overline{U}\subset \Omega$ and $N\in\N$ to be chosen. We prove that $\hat{\mathcal{B}}(\hat{\textbf{w}})-\sum_{n\leq N}\sum_{m\geq0} \frac{y_{n,m}}{\Gamma(1+ns/p+m(1-s)/q)}\xi_1^n\xi_2^m$ admits analytic continuation on $U$. If we replace $\textbf{w}_{N}=\textbf{w}-\sum_{n\leq N} x_2^{q}y_{n}(x_2)x_1^{n+p}$ in equation (\ref{EP principal s1 s2 2}), then using the recurrences (\ref{EP principal en x s1 s2}) we see that $\textbf{w}_{N}$ satisfies the same differential equation (\ref{EP principal s1 s2 2}) but with $\gamma$ replaced by a $\gamma_{N}$ with $\text{ord}_{x_1} \gamma_N>N$. Therefore $\mathcal{B}(\textbf{w}_N)$ satisfies the same convolution equation (\ref{EP convolution w}) but with $g$ replaced by $g_N=\mathcal{B}(x_1^px_2^q \gamma_N)$ and $\text{ord}_{\xi_1} g_N>N$.

Let $E_{U,N}$ denote the subspace of $\mathbf{F}\in\mathcal{C}(\overline{U},\C^l)\cap \mathcal{O}(U,\C^l)$ such that $$\|\textbf{F}\|_N=\sup_{(\zeta_1,\zeta_2)\in \overline{U}} \frac{|\textbf{F}(\zeta_1,\zeta_2)|}{|\zeta_1|^N},$$ is finite. $E_{U,N}$ is a Banach space with the norm $\|\cdot\|_N$ and $g_N\in E_{U,N}$. We shall prove that $\mathcal{H}_N:E_{U,N}\rightarrow E_{U,N}$, defined as $\mathcal{H}$ but with $g_N$ instead of $g$, is well-defined and it is a contraction if $N$ is large enough. Indeed, if $\textbf{F}\in E_{U,N}$ then \begin{align*}
|\mathcal{H}_N(\textbf{F})&(\xi_1,\xi_2)|\\
&\leq M_U\left|\int_0^1 \mathcal{B}(\widetilde{C})(\xi_1 t^{\frac{s}{p}},\xi_2 t^{\frac{1-s}{q}})\textbf{F}(\xi_1(1-t)^{\frac{s}{p}},\xi_2 (1-t)^{\frac{1-s}{q}})\xi_1^p\xi_2^qdt\right|+M_U\|g_N\|_N|\xi_1|^N\\
&\leq M_U\int_0^1 \left|\mathcal{B}(\widetilde{C})(\xi_1 t^{\frac{s}{p}},\xi_2 t^{\frac{1-s}{q}})\right|\|\textbf{F}\|_N|\xi_1|^{p+N}|\xi_2|^q(1-t)^{Ns/p}dt+M_U\|g_N\|_N|\xi_1|^N,
\end{align*} where $M_U$ is a positive constant such that $\left\|\left(\xi_1^p\xi_2^qI-C(0,0)\right)^{-1}\right\|\leq M_U$, on $\overline{U}$. Using Lemma \ref{bound the integral} below we see that $$|\mathcal{H}_N(\textbf{F})(\xi_1,\xi_2)|\leq M_U\left(\frac{DK_U}{N^a}\|\textbf{F}\|_N+\|g_N\|_N\right)|\xi_1|^N,$$ where $a=\min\left\{\frac{s}{p}, \frac{1-s}{q}\right\}$ and $$K_U=\sup_{(\zeta_1,\zeta_2)\in \overline{U}} \hspace{0.1cm}\sum_{(n,m)\neq (0,0)} \frac{|\overline{C}_{n,m}|}{\Gamma\left(\frac{ns}{p}+\frac{m(1-s)}{q}\right)}|\zeta_1|^n|\zeta_2|^m,$$ is finite since $U$ is bounded. Then $\mathcal{H}_N(\textbf{F})\in E_{U,N}$ and furthermore, if $\textbf{F},\textbf{G}\in E_{U,N}$ then $$\|\mathcal{H}_N(\textbf{F})-\mathcal{H}_N(\textbf{G})\|_N\leq \frac{DM_UK_U}{N^a}\|\textbf{F}-\textbf{G}\|_N.$$ Thus $\mathcal{H}_N$ is a contraction if $DM_UK_U<N^a$. By Banach's fixed point theorem $\mathcal{H}_N$ has a unique fixed point $\textbf{F}_{U,N}\in E_{U,N}$, i.e., equation (\ref{EP convolution w}) has a unique holomorphic solution defined on $U$ of the form $\textbf{F}_U=\textbf{F}_{U,N}+\sum_{n\leq N}\sum_{m\geq 0}\frac{y_{n,m}}{\Gamma(1+ns/p+m(1-s)/q)}\xi_1^n\xi_2^m$.

Since the unique solution of equation (\ref{EP convolution w}) in a small polydisc at $(0,0)$ contained in $\Omega$ is $\hat{\mathcal{B}}(\hat{w})$, if $U$ intersects this polydisc, $\textbf{F}_U$ and $\hat{\mathcal{B}}(\hat{\textbf{w}})$ coincide in that intersection. We can conclude that $\hat{\mathcal{B}}(\hat{\textbf{w}})$ admits analytic continuation to $\Omega$.

\textit{Step 3. Determining the exponential growth of solutions of the convolution equation.} It remains to prove that the above solutions have exponential growth of order at most $\left(\frac{pk}{s},\frac{qk}{1-s}\right)$. Let $K$ be a positive constant and $S$ be an unbounded open set such that $\overline{S}\subset\Omega$. We will denote by $E_{S,K}$ the subspace of $\mathbf{F}\in\mathcal{O}(S,\C^l)$ such that $$\|\textbf{F}\|_{K}=\sup_{(\zeta_1,\zeta_2)\in S} |\textbf{F}(\zeta_1,\zeta_2)|e^{-KR(\zeta_1,\zeta_2)},\hspace{0.3cm} R(\zeta_1,\zeta_2)=\max\{|\zeta_1|^\frac{p}{s},|\zeta_2|^{\frac{q}{1-s}}\},$$ is finite. Then $E_{S,K}$ is a Banach space with the norm $\|\cdot\|_K$ and from (\ref{EP g y B_p(A)}) we see we can find a large enough constant $K'>0$ such that $g\in E_{S,K'}$ for any such $S$.

As before we prove that $\mathcal{H}:E_{S,K}\rightarrow E_{S,K}$, is well-defined and a contraction if $K>K'$ is large enough. If $\textbf{F}\in E_{S,K}$, then \begin{align*}
|\mathcal{H}&(\textbf{F})(\xi_1,\xi_2)|\\
&\leq M_S\left|\int_0^1 \mathcal{B}(\widetilde{C})(\xi_1 t^{\frac{s}{p}},\xi_2 t^{\frac{1-s}{q}})\textbf{F}(\xi_1(1-t)^{\frac{s}{p}},\xi_2 (1-t)^{\frac{1-s}{q}})\xi_1^p\xi_2^qdt\right|+M_S\|g\|_{K'}e^{K'R(\xi_1,\xi_2)}\\
&\leq M_S\int_0^1 \left|\xi_1^p\xi_2^q \mathcal{B}(\widetilde{C})(\xi_1 t^{\frac{s}{p}},\xi_2 t^{\frac{1-s}{q}})\right|\|\textbf{F}\|_Ke^{KR(\xi_1,\xi_2)(1-t)}dt+M_S\|g\|_{K'}e^{K'R(\xi_1,\xi_2)},
\end{align*} where $M_S$ is a constant such that $\left\|\left(\xi_1^p\xi_2^qI-C(0,0)\right)^{-1}\right\|\leq M_S$, on $\overline{S}$. To bound this term we use the following estimate \begin{align*}
\int_0^1 t^{\frac{ns}{p}+\frac{m(1-s)}{q}-1}e^{KR(\xi_1,\xi_2)(1-t)}dt\leq \frac{e^{KR(\xi_1,\xi_2)}\Gamma\left(\frac{ns}{p}+\frac{m(1-s)}{q}\right)}{(KR(\xi_1,\xi_2))^{\frac{ns}{p}+\frac{m(1-s)}{q}}},
\end{align*} valid for all $n,m\in \N$, $(n,m)\neq (0,0)$ and $\xi_1,\xi_2\in\C^*$. Then we have \begin{align*}
\left|\mathcal{H}(\textbf{F})(\xi_1,\xi_2)\right|&\leq M_SL\left(\frac{1}{K^{\frac{s}{p}}}+\frac{1}{K^{\frac{1-s}{q}}}\right)\|\textbf{F}\|_Ke^{KR(\xi_1,\xi_2)}+M_S\|g\|_{K'}e^{K'R(\xi_1,\xi_2)}\\
&\leq M_S\left(\frac{2L}{K^a}\|\textbf{F}\|_K+\|g\|_{K'}\right)e^{KR(\xi_1,\xi_2)},
\end{align*} where $a=\min\{\frac{s}{p}, \frac{1-s}{q}\}$ and $L$ is a constant such that $$\sum_{m\geq 1}|\overline{C}_{0m}||\zeta_2|^{m-1}, \sum_{{n\geq 1}\atop {m\geq 0}}|\overline{C}_{nm}||\zeta_1|^{n-1}|\zeta_2|^{m}\leq L,\hspace{0.2cm}\text{ on } D_{r'}^2,$$ $r'<r$ and $K$ chosen such that $1/K^a<r'$. Therefore $\mathcal{H}(\textbf{F})\in E_{S,K}$. In the same way, if $\textbf{F},\textbf{G}\in E_{S,K}$ then $$\|\mathcal{H}(\textbf{F})-\mathcal{H}(\textbf{G})\|_K\leq \frac{2M_SL}{K^a}\|\textbf{F}-\textbf{G}\|_K,$$ and
if we take $K$ such that $2M_SL<K^a$ we can conclude that $\mathcal{H}$ is a contraction and it has a unique fixed point. This means that (\ref{EP convolution w}) has a unique solution defined in $S$ with the exponential growth as above.

If we choose any direction $d\neq \text{arg}(\lambda_j)$, $j=1,...,l$ and $\theta_d>0$ small enough such that $S=S_{p,q}(d, 2\theta_d)\subset \Omega$, then we have proved in particular that $\hat{\mathcal{B}}(\hat{w})$ can be analytically continued to $S$ with exponential growth as required in Definition \ref{def k-s_1,s_2-Borel sum} for $k=1$. Then $\hat{\textbf{y}}$ es $1-(s,1-s)-$Borel summable in $x_1^px_2^q$ in every direction $d\neq \text{arg}(\lambda_j)$ as we wanted to prove.
\end{proof}

\begin{lema}\label{bound the integral} Let $p,q,N\in\N_{>0}$ be given and $n,m\in\N$ not both zero. For $0<s<1$ there is a constant $D$ independent of $n,m$ such that $$\int_0^1 t^{\frac{ns}{p}+\frac{m(1-s)}{q}-1}(1-t)^{\frac{Ns}{p}}dt\leq \frac{D}{N^a},\hspace{0.3cm}  a=\min\left\{\frac{s}{p}, \frac{1-s}{q}\right\}.$$
\end{lema}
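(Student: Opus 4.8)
The plan is to eliminate the dependence on $n,m$ by a monotonicity argument and then recognize the remaining integral as (essentially) a Gamma integral. Write $c=\frac{ns}{p}+\frac{m(1-s)}{q}$, so the integrand is $t^{c-1}(1-t)^{Ns/p}$. The first observation is that, since $(n,m)\neq(0,0)$ and both $\frac{s}{p},\frac{1-s}{q}>0$, one has the uniform lower bound $c\geq a=\min\{\frac{s}{p},\frac{1-s}{q}\}$: indeed if $n\geq 1$ then $c\geq\frac{s}{p}\geq a$, and if $m\geq 1$ then $c\geq\frac{1-s}{q}\geq a$. Because $0<t<1$, the map $x\mapsto t^{x}$ is decreasing, so from $c-1\geq a-1$ we get $t^{c-1}\leq t^{a-1}$ on $(0,1)$. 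This decouples the estimate from $n$ and $m$:
\[
\int_0^1 t^{c-1}(1-t)^{\frac{Ns}{p}}\,dt\leq \int_0^1 t^{a-1}(1-t)^{\frac{Ns}{p}}\,dt.
\]

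Next I would bound the right-hand integral directly. Using the elementary inequality $1-t\leq e^{-t}$ valid for $t\in[0,1]$, we have $(1-t)^{Ns/p}\leq e^{-Nst/p}$; since the integrand is nonnegative, enlarging the domain to $[0,\infty)$ yields a standard Gamma integral:
\[
\int_0^1 t^{a-1}(1-t)^{\frac{Ns}{p}}\,dt\leq\int_0^{\infty} t^{a-1}e^{-\frac{Ns}{p}t}\,dt=\Gamma(a)\left(\frac{p}{Ns}\right)^{a}=\frac{\Gamma(a)(p/s)^{a}}{N^{a}}.
\]
Thus the lemma holds with $D=\Gamma(a)(p/s)^{a}$. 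Note that $a\in(0,1)$ (indeed $a\leq\frac{s}{p}<1$), so $\Gamma(a)$ is finite and positive and $D$ is a genuine constant, independent not only of $n,m$ but also of $N$ — exactly what the contraction argument in Step 2 of Theorem \ref{EP s1 s2 1 summable in xpeq} requires in order that $\frac{DK_U}{N^{a}}\to 0$ as $N\to\infty$.

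I do not expect a serious obstacle here; the only point requiring care is securing a bound uniform in $n,m$, and this is precisely what the inequality $c\geq a$ provides. Alternatively, one can compute the integral exactly as the Beta value $B\!\left(c,1+\tfrac{Ns}{p}\right)=\Gamma(c)\,\Gamma\!\left(1+\tfrac{Ns}{p}\right)/\Gamma\!\left(c+1+\tfrac{Ns}{p}\right)$ and estimate the resulting quotient of Gamma functions by Gautschi's inequality, but the direct route above avoids any appeal to the asymptotics of $\Gamma$ and produces an explicit constant.
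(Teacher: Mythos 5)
Your proof is correct, and it takes a genuinely different (and more elementary) route than the paper's. The paper evaluates the integral exactly as the Beta value $\Gamma(c)\,\Gamma\bigl(1+\tfrac{Ns}{p}\bigr)/\Gamma\bigl(1+c+\tfrac{Ns}{p}\bigr)$ with $c=\tfrac{ns}{p}+\tfrac{m(1-s)}{q}$ — precisely the alternative you sketch in your closing remark — and then estimates the Gamma quotient by a case split: when $c>1$ the log-convexity inequality $\Gamma(1+a)\Gamma(1+b)\leq\Gamma(1+a+b)$ bounds $J$ by $\tfrac{p/s}{N}$, and the finitely many remaining pairs $(n,m)$ with $c\leq 1$ are handled one at a time via the Gamma-ratio limit $\lim_{N\to\infty}(N\sigma+b)^b\Gamma(1+N\sigma)/\Gamma(1+N\sigma+b)=1$, taking $D$ to be the maximum over these finitely many constants. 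You instead bound the integrand pointwise — $t^{c-1}\leq t^{a-1}$ from $c\geq a$ (the key uniformity in $n,m$, which you justify correctly) and $(1-t)^{Ns/p}\leq e^{-Nst/p}$ — reducing everything to a single Gamma integral. What your approach buys is the elimination of the case split and of any appeal to Gamma asymptotics, plus the fully explicit constant $D=\Gamma(a)(p/s)^a$; what the paper's approach buys is an exact evaluation of $J$ that makes the sharpness of the $N^{-a}$ rate (indeed $N^{-b}$ for each fixed $(n,m)$) visible, though at the cost of the two-case analysis. Both yield exactly the bound the contraction argument in Theorem \ref{EP s1 s2 1 summable in xpeq} needs.
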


\begin{proof}Let $J$ denote the integral. Using the Beta function we can write
$$J=\frac{\Gamma\left(\frac{ns}{p}+\frac{m(1-s)}{q}\right)\Gamma\left(1+\frac{Ns}{p}\right)}{\Gamma\left(1+\frac{(n+N)s}{p}+\frac{m(1-s)}{q}\right)}.$$ To prove the lemma we consider two situations. First, if $\frac{ns}{p}+\frac{m(1-s)}{q}>1$, we use the inequality (\ref{inq Gamma}) to bound $J$ by $\frac{p/s}{N}\leq \frac{p/s}{N^a}$. In the finite remaining cases we can use the limit $\lim_{N\rightarrow+\infty}\frac{(N\sigma+b)^{b}\Gamma(1+N\sigma)}{\Gamma(1+N\sigma+b)}=1$, with $\sigma=\frac{s}{p}$ and $b=\frac{ns}{p}+\frac{m(1-s)}{q}\geq a$ to find positive constants $D_{n,m}^{p,q,s}$ such that $J$ is bounded by $\frac{D_{n,m}^{p,q,s}}{\left(\frac{Ns}{p}+b\right)^b}\leq \frac{(p/s)^bD_{n,m}^{p,q,s}}{N^a}$. The result follows taking $D$ large enough.
\end{proof}

To finish this section we want to apply Theorem \ref{EP s1 s2 1 summable in xpeq} to exhibit a case when certain Pfaffian systems with normal crossings, in case of having a formal solution, this solution converges. We will be concerned with systems of the form \begin{subnumcases}\empty\label{EP principal x}
x_2^qx_1^{p+1}\frac{\d \textbf{y}}{\d x_1}=A(x_1,x_2)\textbf{y}+\gamma_1(x_1,x_2),\\
\label{EP principal e} x_1^{p}x_2^{q+1}\frac{\d \textbf{y}}{\d x_2}=B(x_1,x_2)\textbf{y}+\gamma_2(x_1,x_2),
\end{subnumcases} where $p,q\in\N_{>0}$ and $A,B$ (resp. $\gamma_1, \gamma_2$) are square matrixes of size $l$ (resp. vectors of size $l$) with entries holomorphic functions in some polydisc $D_r^2$ at the origin in $\C^2$. The same problem was studied in \cite{CM} for systems where the monomials involved in each equation where different and conditions to obtain convergence of a formal solution were given using tauberian properties for monomial summability.

A natural condition to request for these systems and to relate the solutions of both equations is the \textit{integrability condition} that in this context can be written as the pair of equations \begin{align}\label{EP integrability AB}
x_1^{p}x_2^{q}&\left(x_2\frac{\d A}{\d x_2}-qA\right)-x_1^{p}x_2^{q}\left(x_1\frac{\d B}{\d x_1}-pB\right)+[A,B]=\mathbf{0},\\
x_1^{p}x_2^{q}&\left(x_2\frac{\d \gamma_1}{\d x_2}-q\gamma_1\right)-x_1^{p}x_2^{q}\left(x_1\frac{\d \gamma_2}{\d x_1}-p\gamma_1\right)+A\gamma_2-B\gamma_1=\mathbf{0}, \hspace{0.2cm}\text{ on } D_r^2.
\end{align} This is a restrictive condition, for instance if (\ref{EP integrability AB}) holds then for every eigenvalue $\mu$ of $B(0,0)$ there is an eigenvalue $\lambda$ of $A(0,0)$ such that $q\lambda=p\mu$. The number $\lambda$ is an eigenvalue of $A(0,0)$, when restricted to its invariant subspace $\{v\in\C^l | (B(0,0))-\mu I)^kv=0\text{ for some }k\in\N\}$ (see \cite[Thm. 4.4]{CM}). In particular this implies that the spectrum of $\frac{s}{p}A(0,0)+\frac{1-s}{q}B(0,0)$, $0\leq s\leq 1$ is independent of $s$.

If we multiply (\ref{EP principal x}) by $\frac{s}{p}$, (\ref{EP principal e}) by $\frac{1-s}{q}$ and add them we get an equation of the form (\ref{EP principal s1 s2}) where $C_s=\frac{s}{p}A+\frac{1-s}{q}B$ and $\gamma_s=\frac{s}{p}\gamma_1+\frac{1-s}{q}\gamma_2$. Using Theorem \ref{EP s1 s2 1 summable in xpeq} we have the following proposition describing some summability properties of solutions of such systems.

\begin{prop}\label{EP has 1 pq gevrey solution}The following assertions hold:
\begin{enumerate}
\item If the system (\ref{EP principal x}), (\ref{EP principal e}) is completely integrable and $A(0,0)$ or $B(0,0)$ is invertible then the system (\ref{EP principal x}), (\ref{EP principal e}) has a unique formal solution. This solution is $1-$summable in $x_1^px_2^q$.
\item If the system has a formal solution $\normalfont\hat{\textbf{y}}$ and $C_s(0,0)$ is invertible for some $0<s<1$, then $\normalfont\hat{\textbf{y}}$ is $1-$summable in $x_1^px_2^q$. The possible singular directions are the those passing through the eigenvalues of $C_s(0,0)$.
\end{enumerate}
\end{prop}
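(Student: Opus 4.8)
The plan is to reduce both statements to Theorem \ref{EP s1 s2 1 summable in xpeq} through the combined equation already isolated before the proposition. Recall that multiplying (\ref{EP principal x}) by $\frac{s}{p}$, (\ref{EP principal e}) by $\frac{1-s}{q}$ and adding produces an equation of the shape (\ref{EP principal s1 s2}) with coefficient $C_s=\frac{s}{p}A+\frac{1-s}{q}B$ and inhomogeneity $\gamma_s=\frac{s}{p}\gamma_1+\frac{1-s}{q}\gamma_2$, and that any formal (or genuine) solution of the system solves this equation for every $s$. The decisive point is the invertibility of $C_s(0,0)$. As recalled just before the proposition, the integrability condition (\ref{EP integrability AB}) forces, for each eigenvalue $\mu$ of $B(0,0)$, a matching eigenvalue $\lambda=p\mu/q$ of $A(0,0)$, and makes the spectrum of $C_s(0,0)$ independent of $s\in[0,1]$. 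Evaluating at $s=1$ and $s=0$ identifies this spectrum with $\{\lambda/p\}=\{\mu/q\}$, so $C_s(0,0)$ is invertible for every $0<s<1$ exactly when $A(0,0)$ is invertible, equivalently when $B(0,0)$ is invertible.

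With this in hand, statement (2) is almost immediate. By hypothesis $\hat{\mathbf{y}}$ solves the system, hence it solves (\ref{EP principal s1 s2}) for the given $s$ with $C=C_s$ and $\gamma=\gamma_s$. Since $C_s(0,0)$ is invertible, Proposition \ref{EP1 has 1 pq gevrey solution} guarantees that this equation has a \emph{unique} formal solution, which must therefore coincide with $\hat{\mathbf{y}}$. Theorem \ref{EP s1 s2 1 summable in xpeq} then applies verbatim and shows that $\hat{\mathbf{y}}$ is $1$-summable in $x_1^px_2^q$, the possible singular directions being those passing through the eigenvalues of $C_s(0,0)$.

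For statement (1) it remains to produce the formal solution; the invertibility of $C_s(0,0)$ is then furnished by the first paragraph. Assume, say, that $A(0,0)$ is invertible (the case of $B(0,0)$ is symmetric). Comparing coefficients of $x_1^Nx_2^M$ in (\ref{EP principal x}) gives the recurrence $A(0,0)y_{N,M}=(N-p)y_{N-p,M-q}-\sum_{(i,j)\neq(0,0)}A_{i,j}y_{N-i,M-j}-(\gamma_1)_{N,M}$, whose right-hand side involves only coefficients of strictly smaller total degree; since $A(0,0)$ is invertible this determines $\hat{\mathbf{y}}$ uniquely, so the system has at most one formal solution. To see that this $\hat{\mathbf{y}}$ also solves (\ref{EP principal e}) I would set $\mathbf{R}=x_1^px_2^{q+1}\partial_{x_2}\hat{\mathbf{y}}-B\hat{\mathbf{y}}-\gamma_2$ and compute, using the commutator of the two singular operators together with both integrability identities in (\ref{EP integrability AB}), that $\mathbf{R}$ satisfies a homogeneous singular linear equation $x_2^qx_1^{p+1}\partial_{x_1}\mathbf{R}=\Phi(x_1,x_2)\mathbf{R}$ whose leading coefficient at the origin is governed by the invertible matrix $A(0,0)$; the same coefficient recurrence then forces $\mathbf{R}\equiv\mathbf{0}$. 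This is the classical complete-integrability (Frobenius) argument, cf.\ \cite{CM}. Once $\hat{\mathbf{y}}$ is known to exist, the reasoning of statement (2) applies for any $0<s<1$ and yields $1$-summability in $x_1^px_2^q$.

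The main obstacle is precisely this verification that the series built from one equation satisfies the other: carrying out the commutator computation and organizing the two integrability identities so that the inhomogeneous ($\gamma$) terms cancel, leaving a genuinely homogeneous equation for $\mathbf{R}$ with invertible leading data. The summability conclusion itself is comparatively soft, being entirely outsourced to Theorem \ref{EP s1 s2 1 summable in xpeq} once the invertibility of $C_s(0,0)$ has been secured through the $s$-independence of its spectrum.
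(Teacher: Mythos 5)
Your proof follows essentially the same route the paper intends: multiply the two equations by $\frac{s}{p}$ and $\frac{1-s}{q}$, add, observe that integrability makes the spectrum of $C_s(0,0)$ independent of $s$ (so invertibility of $A(0,0)$ or $B(0,0)$ yields invertibility of every $C_s(0,0)$), and apply Theorem \ref{EP s1 s2 1 summable in xpeq}; the paper states the proposition without further argument precisely on this basis. Your additional Frobenius-type verification for part (1) — that the residual $\mathbf{R}$ of the second equation satisfies the homogeneous equation $x_2^qx_1^{p+1}\partial_{x_1}\mathbf{R}=(A+px_1^px_2^qI)\mathbf{R}$ once both identities in the integrability condition are used — is correct and supplies a detail the paper leaves implicit.
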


In the non-integrable case there are not imposed relations between $A(0,0)$ and $B(0,0)$ and then there are more possible situations for the spectrum of $C_s(0,0)$. In particular there is a case when we can conclude convergence due to the absence of singular directions as explained below.

\begin{teor}\label{ultimo caso pfaffianos} Assume that the system (\ref{EP principal x}), (\ref{EP principal e}) has a formal solution $\normalfont\hat{\textbf{y}}$. Let $\lambda_1(s),...,\lambda_l(s)$ denote the eigenvalues of $\frac{s}{p}A(0,0)+\frac{(1-s)}{q}B(0,0)$, $0\leq s\leq 1$, and assume that they are never zero. If for every direction $d$ there is $s\in[0,1]$ such that $\text{arg}(\lambda_j(s))\neq d$ for all $j=1,...,l$ then $\normalfont\hat{\textbf{y}}$ is convergent.
\end{teor}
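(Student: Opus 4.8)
The plan is to prove that $\hat{\textbf{y}}$ has \emph{no} singular directions as a series $1$-summable in the monomial $x_1^px_2^q$, and then to invoke the observation recorded after Theorem \ref{Monomial summability t Borel Laplace}, namely that a series in $E\{x_1,x_2\}^{(p,q)}_{1}$ with no singular direction is convergent. The decisive feature I would exploit is that, by Theorem \ref{Monomial summability t Borel Laplace}, $1$-summability in $x_1^px_2^q$ in a fixed direction $d$ is \emph{independent of the weight parameter} $s$: if $\hat{\textbf{y}}$ is $1$-$(s,1-s)$-Borel summable in direction $d$ for even a single $s$, then it is $1$-summable in $x_1^px_2^q$ in direction $d$ in the $s$-free sense. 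By contrast, the set of \emph{a priori} singular directions produced by the $(s,1-s)$-analysis does move with $s$, and this is exactly what the hypothesis lets me exploit.

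First I would record that, since the eigenvalues $\lambda_j(s)$ never vanish on the compact interval $[0,1]$, the matrix $C_s(0,0)=\frac{s}{p}A(0,0)+\frac{1-s}{q}B(0,0)$ is invertible for every $s\in[0,1]$. Forming, as in the discussion preceding Proposition \ref{EP has 1 pq gevrey solution}, the equation obtained by multiplying (\ref{EP principal x}) by $\frac{s}{p}$, (\ref{EP principal e}) by $\frac{1-s}{q}$ and adding, one gets for each such $s$ an equation of the form (\ref{EP principal s1 s2}) with $C=C_s$ and $\gamma=\gamma_s$; by Proposition \ref{EP1 has 1 pq gevrey solution} this equation has a unique formal solution, which must therefore be the given $\hat{\textbf{y}}$. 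Consequently $\hat{\textbf{y}}$ is $1$-summable in $x_1^px_2^q$ with all its singular directions contained in $\{\arg\lambda_j(s):1\le j\le l\}$: for $0<s<1$ this is Proposition \ref{EP has 1 pq gevrey solution}(2), while for the endpoints $s\in\{0,1\}$ equation (\ref{EP principal s1 s2}) degenerates to one of the doubly singular equations (\ref{SPDE}), for which the same conclusion is classical \cite{Monomial summ}. In particular $\hat{\textbf{y}}\in E\{x_1,x_2\}^{(p,q)}_{1}$, so the final convergence criterion will be applicable.

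The combination step is then immediate. Fix an arbitrary direction $d$. The hypothesis furnishes some $s\in[0,1]$ with $d\neq\arg\lambda_j(s)$ for all $j$, so $d$ is not among the singular directions produced by the $(s,1-s)$-analysis; hence $\hat{\textbf{y}}$ is $1$-$(s,1-s)$-Borel summable in direction $d$, and by Theorem \ref{Monomial summability t Borel Laplace} it is $1$-summable in $x_1^px_2^q$ in direction $d$. Since $d$ was arbitrary, $\hat{\textbf{y}}$ is summable in \emph{every} direction, i.e.\ it has no singular directions, and the observation after Theorem \ref{Monomial summability t Borel Laplace} yields convergence. If one prefers to sidestep the endpoint discussion, I would note that $s\mapsto\{\lambda_j(s)\}$ varies continuously and stays bounded away from $0$ on $[0,1]$, so the condition $d\notin\{\arg\lambda_j(s)\}$ is open in $s$; whenever it holds at an endpoint it also holds at a nearby interior $s\in(0,1)$, allowing Proposition \ref{EP has 1 pq gevrey solution}(2) to be used throughout.

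The part requiring the most care is the logical core of the last two paragraphs: one must keep clearly separated the \emph{$s$-dependent} bookkeeping of candidate singular directions from the \emph{$s$-independent} notion of summability in a direction, and argue that the true set of singular directions of $\hat{\textbf{y}}$ is contained in $\bigcap_{s\in[0,1]}\{\arg\lambda_j(s):1\le j\le l\}$, which the hypothesis forces to be empty. Everything else—invertibility of $C_s(0,0)$, the identification of $\hat{\textbf{y}}$ as the unique formal solution of each combined equation, and the concluding appeal to convergence—is routine once this separation is in place.
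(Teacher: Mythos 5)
Your proposal is correct and follows essentially the same route as the paper, whose entire proof is the one-line observation that Proposition \ref{EP has 1 pq gevrey solution}(2) leaves no singular direction for $1$-summability in $x_1^px_2^q$, whence convergence by the remark after Theorem \ref{Monomial summability t Borel Laplace}. Your additional care with the endpoints $s\in\{0,1\}$ (via openness of the condition $d\notin\{\arg\lambda_j(s)\}$) addresses a detail the paper's proof silently elides, but the underlying argument is identical.
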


\begin{proof}Using Proposition \ref{EP has 1 pq gevrey solution} (2) we see that $\hat{\textbf{y}}$ has no singular directions for $1-$summability in $x_1^px_2^q$ and thus it is convergent.
\end{proof}

\section{Further developments} \label{further}

In order to treat more general equations, it will be necessary to introduce a notion of multisummability, with respect to several monomials, for instance, as in the one variable case.
A possible approach would be through generalized acceleration operators, following the ideas of \'{E}calle in the theory of one variable.
Indeed, an analogue to accelerator operators can be defined by formally computing the composition between a Borel and Laplace transforms of different indexes. If $p,q,p',q'\in\N_{>0}$, $k,k'>0$ and $0<s,s'<1$ are given it can be shown that \begin{align*}
\mathcal{B}_{\boldsymbol{\beta}}\circ\mathcal{L}_{\boldsymbol{\a}}(f)(\xi_1,\xi_2)=\frac{(\xi_1^p\xi_2^q)^k}{(\xi_1^{p'}\xi_2^{q'})^{k'}}\int_0^{e^{i\theta}\infty} f(\xi_1\tau^{\frac{s}{pk}},\xi_2\tau^{\frac{1-s}{qk}}C_{\Lambda k'/k}(\tau)d\tau,
\end{align*} where $C_{\Lambda k'/k}$ is an \textit{acceleration function} of Ecalle, $\boldsymbol{\alpha}=\left(\frac{s}{pk},\frac{1-s}{qk}\right)$, $\boldsymbol{\beta}=\left(\frac{s'}{p'k'},\frac{1-s'}{q'k'}\right)$, $|\theta|<\pi/2$ and it is requested that $\Lambda:=\frac{s}{s'}\frac{p'}{p}=\frac{1-s}{1-s'}\frac{q'}{q}>k/k'$. We refer the reader to \cite{CThesis} for more information.

Further development of these ideas, and applications, will be the content of a future work.

\bibliographystyle{plaindin_esp}

\end{document}